\newtheorem{theorem}{Theorem}[section]
\newtheorem{lemma}[theorem]{Lemma}
\newtheorem{definition}[theorem]{Definition}
\numberwithin{equation}{section}
\DeclareMathOperator{\rank}{rank}
\DeclareMathOperator{\spn}{span}
\def\F{\mathbb{F}_q}
\def\P{\mathcal{P}}
\def\L{\mathcal{L}}
\def\K{\mathcal{K}}
\def\HH{\mathcal{H}}
\begin{document}
\title{Incidences between planes over finite fields}

\author{
Nguyen Duy Phuong\thanks{University of Science, Vietnam National University Hanoi
    Email: {\tt duyphuong@vnu.edu.vn
}}
\and 
    Pham Van Thang\thanks{EPFL, Lausanne, Switzerland. Research partially supported by Swiss National Science Foundation
Grants 200020-144531 and 200021-137574.
    Email: {\tt thang.pham@epfl.ch}}
  \and
    Le Anh Vinh\thanks{University of Education, Vietnam National University Hanoi. Research was supported by Vietnam National Foundation for Science and Technology Development grant 101.99-2013.21.
    Email: {\tt vinhla@vnu.edu.vn
}}}
\date{}
\maketitle
\begin{abstract}
We use methods from  spectral graph theory to obtain bounds on the number of incidences between $k$-planes and $h$-planes in $\mathbb{F}_q^d$ which generalize a recent result given by Bennett, Iosevich, and Pakianathan (2014). More precisely, we prove that the number of incidences between a set $\mathcal{P}$ of $k$-planes and a set $\mathcal{H}$ of $h$-planes with $h\ge 2k+1$,  which is denoted by $I(\mathcal{P},\mathcal{H})$, satisfies
\[\left\vert I(\mathcal{P},\mathcal{H})-\frac{|\mathcal{P}||\mathcal{H}|}{q^{(d-h)(k+1)}}\right\vert \lesssim q^{\frac{(d-h)h+k(2h-d-k+1)}{2}}\sqrt{|\mathcal{P}||\mathcal{H}|}. \]
\end{abstract}
\section{Introduction}

Let $\F$ be a finite field of $q$ elements where $q$ is an odd prime power. Let $\P$ be a set of points, $\mathcal{L}$ a set of lines, and $I(\P,\L)$ the number of incidences between $\P$ and $\L$.  In \cite{bourgain-katz-tao} Bourgain, Katz, and Tao proved that the number of incidences between a point set of $N$ points and a line set $N$ lines is at most $O(N^{3/2-\epsilon})$. Here and throughout, $X\gtrsim Y$ means that $X\ge CY$ for some constant $C$ and $X\gg Y$ means that $Y=o(X)$, where $X,Y$ are viewed as functions of the parameter $q$. 

Note that one can easily obtain the bound $N^{3/2}$ by using the Tur\'{a}n number and the fact that two lines intersect in at most one point.  The relationship between $\epsilon$ and $\alpha$ in the result of Bourgain, Katz, and Tao is difficult to determine, and it is far from tight. If $N\ll q$, then Grosu \cite{gro} proved that one can embed the point set and the line set to $\mathbb{C}^2$ without changing the incidence structure. Thus it follows from a tight bound on the number of incidences between points and lines in $\mathbb{C}^2$ due to T\'{o}th \cite{to} that $I(\P,\L)=O(N^{4/3})$. By using methods from spectral graph theory, the third listed author \cite{vinh-incidence} gave a tight bound for the case $N>q$ as follows.

\begin{theorem}\label{vinh-pl}
Let $\P$ be a set of points and $\L$ be a set of lines in $\mathbb{F}_q^2$. Then we have 
\begin{equation}\label{vinh-in}\left\vert I(\P,\L)-\frac{|\P||\L|}{q}\right\vert\le q^{1/2}\sqrt{|\P||\L|}.\end{equation}
\end{theorem}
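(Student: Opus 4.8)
The plan is to interpret $I(\P,\L)$ as the number of edges between two vertex subsets of a large biregular bipartite graph and then to estimate that quantity spectrally, following the spectral approach alluded to above. Let $G$ be the bipartite graph with vertex classes $V_1=\mathbb{F}_q^2$ (the set of all points) and $V_2$ the set of all $q^2+q$ affine lines in $\mathbb{F}_q^2$, where a point is adjacent to a line exactly when it lies on that line. Every point lies on exactly $q+1$ lines and every line contains exactly $q$ points, so $G$ is biregular; let $N$ be its $q^2\times(q^2+q)$ incidence matrix. For arbitrary $\P\subseteq V_1$ and $\L\subseteq V_2$ one has $I(\P,\L)=\mathbf{1}_\P^{\top}N\mathbf{1}_\L$, so the whole problem reduces to understanding the singular values of $N$.

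The decisive input is the elementary incidence-geometric fact that two distinct points of $\mathbb{F}_q^2$ lie on a unique common line, which yields the clean identity $NN^{\top}=qI+J$, with $I,J$ of size $q^2$. Hence $NN^{\top}$ has eigenvalue $q^2+q$ with multiplicity one (eigenvector the all-ones vector) and eigenvalue $q$ with multiplicity $q^2-1$; equivalently the singular values of $N$ are $\sqrt{q^2+q}$ (simple) and $\sqrt q$ (multiplicity $q^2-1$), the remaining $q$ singular values being forced to be $0$. Biregularity also identifies the top left and right singular vectors as the normalized all-ones vectors $\mathbf{u}\in\mathbb{R}^{V_1}$, $\mathbf{v}\in\mathbb{R}^{V_2}$: since $N\mathbf{1}_{V_2}=(q+1)\mathbf{1}_{V_1}$ and $N^{\top}\mathbf{1}_{V_1}=q\,\mathbf{1}_{V_2}$, we get $N\mathbf{v}=\sqrt{q^2+q}\,\mathbf{u}$ and $N^{\top}\mathbf{u}=\sqrt{q^2+q}\,\mathbf{v}$.

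Then I would run the expander mixing lemma for biregular bipartite graphs inline. Decompose $\mathbf{1}_\P=\tfrac{|\P|}{q}\mathbf{u}+\mathbf{x}$ and $\mathbf{1}_\L=\tfrac{|\L|}{\sqrt{q^2+q}}\mathbf{v}+\mathbf{y}$ with $\mathbf{x}\perp\mathbf{u}$ and $\mathbf{y}\perp\mathbf{v}$. Because $N\mathbf{v}$ is parallel to $\mathbf{u}$ and $N^{\top}\mathbf{u}$ is parallel to $\mathbf{v}$, the two cross terms vanish and
\[
I(\P,\L)=\frac{|\P||\L|}{q\sqrt{q^2+q}}\,\mathbf{u}^{\top}N\mathbf{v}+\mathbf{x}^{\top}N\mathbf{y}=\frac{|\P||\L|}{q}+\mathbf{x}^{\top}N\mathbf{y}.
\]
Expanding $\mathbf{x}^{\top}N\mathbf{y}$ in the singular value decomposition of $N$, the $\sqrt{q^2+q}$-component drops out since $\mathbf{x}\perp\mathbf{u}$, so two applications of Cauchy--Schwarz give $|\mathbf{x}^{\top}N\mathbf{y}|\le\sqrt q\,\|\mathbf{x}\|\,\|\mathbf{y}\|\le\sqrt q\,\|\mathbf{1}_\P\|\,\|\mathbf{1}_\L\|=\sqrt q\,\sqrt{|\P||\L|}$, which is exactly \eqref{vinh-in}.

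The algebra is routine; the one point that genuinely has to be checked is the identity $NN^{\top}=qI+J$ and the ensuing assertion that \emph{every} nontrivial singular value of $N$ equals $\sqrt q$ exactly. The theorem leaves no slack in the constant, so an approximate spectral gap would not be enough — the exact eigenvalues are essential, and these are precisely what the incidence geometry of the affine plane (two points determine one line) delivers. A minor bookkeeping point is the bias between points and lines in $\mathbb{F}_q^2$: working with the affine plane rather than the projective plane is what makes the main term come out as exactly $|\P||\L|/q$.
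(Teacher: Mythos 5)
Your proof is correct, and it is essentially the same spectral argument the paper runs in general form: your inline computation is exactly Lemma \ref{expander} applied to the point--line incidence graph (with $a/|B|=(q+1)/(q^2+q)=1/q$), and your identity $NN^{\top}=qI+J$ giving second singular value $\sqrt q$ is the $k=0$, $h=1$, $d=2$ case of the paper's $NN^{\top}$ decomposition used to bound $\lambda_3$. The only cosmetic difference is that you carry out the mixing lemma via an explicit SVD decomposition of $\mathbf{1}_\P$ and $\mathbf{1}_\L$ rather than quoting it, which changes nothing of substance.
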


It follows from Theorem \ref{vinh-pl} that if $N\ge q^{3/2}$, then the number of incidences between points and lines is at most $(1+o(1))N^{4/3}$, which meets the Szemer\'{e}di-Trotter bound. Theorem \ref{vinh-pl} has many interesting applications in several combinatorial problems, see for example \cite{ hen1, hen2, area,vinh-incidence}.

It also follows from the lower bound that if $|\P||\L|\gtrsim q^3$, then there exists at least one pair $(p,l)\in \P\times \L$ such that $p\in l$. The lower bound of Theorem \ref{vinh-pl} is proved to be sharp up to a constant in the sense that there exist a point set $\P$ and a line set $\L$ with $|\P|=|\L|=q^{3/2}$ without incidences (see \cite{vinhpointline} for more details). Furthermore, the third listed author proved that almost every point set $\P$ and line set $\L$ in $\mathbb{F}_q^d$ of cardinality $|\P|=|\L|\gtrsim q$, there exists at least one incidence $(p,l)\in \P\times \L$. More precisely, the statement is as follows.
\begin{theorem}[\textbf{Vinh} \cite{vinhpointline}]
For any $\alpha>0$, there exist an integer $q_0=q(\alpha)$ and a number $C_\alpha>0$ satisfying the following property. When a point set $\P$ and a line set $\L$ with $|\P|=|\L|=s\ge C_\alpha q$, are chosen randomly in $\F^2$, then the probability of $\{(p,l)\in \P\times \L\colon p\in l \}\equiv\emptyset$ is at most $\alpha^s$, provided that $q\ge q_0$.
\end{theorem}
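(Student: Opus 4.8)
We may assume $0<\alpha<1$, since for $\alpha\ge 1$ there is nothing to prove; set $C_\alpha:=1/\alpha^2$ (any $q_0$, say $q_0=3$, will do). The plan is to condition on the random set of lines $\L$ and use Theorem \ref{vinh-pl} to bound the number of points that lie on no line of $\L$.

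The one substantial step is a \emph{uniform} estimate: for every set $\L$ of $s$ lines in $\F^2$, the set $U_\L:=\F^2\setminus\bigcup_{\ell\in\L}\ell$ of points on no line of $\L$ satisfies $|U_\L|\le q^{5/2}/\sqrt s$. To see this, apply Theorem \ref{vinh-pl} to the point set $\bigcup_{\ell\in\L}\ell$ and the line set $\L$ itself. Every $\ell\in\L$ is contained in $\bigcup_{\ell'\in\L}\ell'$, so it contributes all of its $q$ points to the count; thus $I\bigl(\bigcup_{\ell\in\L}\ell,\L\bigr)=qs$, while $|\bigcup_{\ell\in\L}\ell|=q^2-|U_\L|$ and $|\L|=s$. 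Feeding this into \eqref{vinh-in},
\[
\frac{|U_\L|\,s}{q}=\left|\,qs-\frac{(q^2-|U_\L|)\,s}{q}\,\right|\le q^{1/2}\sqrt{(q^2-|U_\L|)\,s}\le q^{3/2}\sqrt s ,
\]
so $|U_\L|\le q^{5/2}/\sqrt s$, i.e.\ $|U_\L|/q^2\le\sqrt{q/s}$.

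The rest is routine. Condition on $\L$: a uniformly random $s$-point set $\P$ has $I(\P,\L)=0$ exactly when $\P\subseteq U_\L$, which happens with probability $\binom{|U_\L|}{s}\big/\binom{q^2}{s}\le(|U_\L|/q^2)^s\le(q/s)^{s/2}$ — the first inequality holding because $|U_\L|\le q^2$. Averaging over $\L$ gives $\Pr[I(\P,\L)=0]\le(q/s)^{s/2}$; and when $s\ge C_\alpha q=q/\alpha^2$ we have $q/s\le\alpha^2$, so this is at most $(\alpha^2)^{s/2}=\alpha^s$, completing the argument.

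I do not expect a genuine obstacle. The point is simply to insert the ``self-referential'' pair $\bigl(\bigcup_{\ell\in\L}\ell,\L\bigr)$ into Theorem \ref{vinh-pl}: this forces the incidence count to be exactly $qs$, which pins $\bigcup_{\ell\in\L}\ell$ to within $O(q^{5/2}/\sqrt s)$ of the whole plane \emph{uniformly} over $\L$, so that no concentration-of-measure argument for the random line set is needed. (With more work one can push the admissible constant down to $C_\alpha\asymp1/\alpha$ — for instance by expanding $\mathbb E_\L|U_\L|^s$ and bounding the number of lines meeting a given $s$-point set via Cauchy--Schwarz — but $C_\alpha=1/\alpha^2$ already suffices.)
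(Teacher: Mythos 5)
The paper does not prove this statement---it is imported verbatim from \cite{vinhpointline} as background---so there is no in-paper argument to compare yours against; I can only assess your proposal on its own terms, and it is correct. The one substantive step, the uniform bound $|U_\L|\le q^{5/2}/\sqrt{s}$ obtained by feeding the pair $\bigl(\bigcup_{\ell\in\L}\ell,\ \L\bigr)$ into Theorem \ref{vinh-pl}, checks out: every line of $\F^2$ has exactly $q$ points, so $I\bigl(\bigcup_{\ell\in\L}\ell,\L\bigr)=qs$ exactly, the main terms cancel to leave $|U_\L|s/q$ on the left, and the right-hand side is at most $q^{3/2}\sqrt{s}$. The combinatorial step $\binom{|U_\L|}{s}\big/\binom{q^2}{s}\le(|U_\L|/q^2)^s$ is valid since each factor $(|U_\L|-i)/(q^2-i)$ is at most $|U_\L|/q^2$, and the conditional bound $(q/s)^{s/2}$ is uniform in $\L$, so averaging is harmless; with $C_\alpha=1/\alpha^2$ this yields $\alpha^s$. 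Two cosmetic remarks: you should note that the hypothesis $s\ge q/\alpha^2$ forces $s\le q^2+q$ (the total number of lines) only when $q\gtrsim 1/\alpha^2$, which is where a nontrivial choice of $q_0$ actually earns its keep in making the statement non-vacuous; and your argument silently fixes the probability model as uniformly random $s$-subsets, which is the natural reading of the statement but worth saying explicitly. Compared with the spectral counting in \cite{vinhpointline}, your route has the virtue of quoting only the incidence inequality \eqref{vinh-in} as a black box, at the cost of a slightly larger constant $C_\alpha$.
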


Using the same ideas, the third listed author \cite{vinh-incidence} generalized Theorem \ref{vinh-pl} to the case of points and hyperplanes in $\F^d$ as follows.
\begin{theorem}[\textbf{Vinh }\cite{vinh-incidence}]\label{plane}
Let $\P$ be a set of points and $\HH$ be a set of hyperplanes in $\F^d$. Then the number of incidences between points and hyperplanes satisfies
\[\left\vert I(\P,\HH)-\frac{|\P||\HH|}{q}\right\vert\le (1+o(1))q^{(d-1)/2}\sqrt{|\P||\HH|}.\]
\end{theorem}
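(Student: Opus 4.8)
The plan is to realize $I(\P,\HH)$ as an edge count in a biregular bipartite graph and to extract the error term from the single nontrivial singular value of its biadjacency matrix, which will turn out to equal exactly $q^{(d-1)/2}$.

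First I would set up the incidence matrix. Let $M$ be the $0/1$ matrix whose rows are indexed by all $q^d$ points of $\F^d$ and whose columns are indexed by all affine hyperplanes, with $M_{p,H}=1$ precisely when $p\in H$, so that $I(\P,\HH)=\mathbf{1}_\P^{\!\top}M\,\mathbf{1}_\HH$ for the indicator vectors $\mathbf{1}_\P,\mathbf{1}_\HH$. This graph is biregular: every point lies on $(q^d-1)/(q-1)$ hyperplanes and every hyperplane contains $q^{d-1}$ points, so $M\mathbf{1}=\tfrac{q^d-1}{q-1}\mathbf{1}$ on the point side and $M^{\!\top}\mathbf{1}=q^{d-1}\mathbf{1}$ on the hyperplane side; the all-ones vectors are the singular vectors for the top singular value. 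Counting edges two ways gives the number of hyperplanes $N=q(q^d-1)/(q-1)$, and hence $\tfrac{q^d-1}{q-1}\cdot\tfrac1N=\tfrac1q$, which will produce the main term.

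The key computation is the Gram matrix $MM^{\!\top}$, whose $(p,p')$ entry counts hyperplanes through both $p$ and $p'$. For $p=p'$ this is $(q^d-1)/(q-1)$; for $p\neq p'$, a hyperplane $a\cdot x=b$ contains both iff $a\cdot(p-p')=0$, so the admissible normal directions $a$ are the nonzero vectors of a fixed $(d-1)$-dimensional subspace taken up to scaling, giving exactly $(q^{d-1}-1)/(q-1)$ hyperplanes, \emph{independent of the pair}. Therefore
\[ MM^{\!\top}=\frac{q^{d-1}-1}{q-1}\,J+q^{d-1}I, \]
where $J$ is the all-ones matrix. This identity is the crux and the only place the geometry enters: the two-point incidence count being pair-independent is precisely the regularity that makes the spectral method tight. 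Since $J$ has eigenvalue $q^d$ on $\mathbf{1}$ and $0$ on $\mathbf{1}^{\perp}$, every singular value of $M$ orthogonal to the all-ones direction equals $\sqrt{q^{d-1}}=q^{(d-1)/2}$.

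Finally I would run the expander-mixing estimate directly. Writing $\mathbf{1}_\P=\tfrac{|\P|}{q^d}\mathbf{1}+\mathbf{p}$ and $\mathbf{1}_\HH=\tfrac{|\HH|}{N}\mathbf{1}+\mathbf{h}$ with $\mathbf{p},\mathbf{h}$ orthogonal to the respective all-ones vectors, biregularity kills the two cross terms ($M\mathbf{1}$ and $M^{\!\top}\mathbf{1}$ are multiples of all-ones vectors), the principal term evaluates to $\tfrac{|\P||\HH|}{q}$ via the identity $\tfrac{q^d-1}{q-1}\cdot\tfrac1N=\tfrac1q$, and the remaining term $\mathbf{p}^{\!\top}M\mathbf{h}$ is bounded by $q^{(d-1)/2}\|\mathbf{p}\|\,\|\mathbf{h}\|\le q^{(d-1)/2}\sqrt{|\P||\HH|}$ using $\|\mathbf{p}\|^2\le|\P|$ and $\|\mathbf{h}\|^2\le|\HH|$. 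This yields the stated inequality, in fact with the clean constant $q^{(d-1)/2}$, which is marginally stronger than the asserted $(1+o(1))q^{(d-1)/2}$. The only genuine obstacle is the Gram-matrix identity above; once the two-point count is seen to be pair-independent, the rest is bookkeeping.
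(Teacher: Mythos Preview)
Your proof is correct. Note that the paper does not itself prove Theorem~\ref{plane}; it is quoted as a known result of Vinh. Your argument is exactly the standard spectral proof, and it coincides with what one gets by specializing the paper's general machinery to $k=0$, $h=d-1$: the expander mixing lemma (Lemma~\ref{expander}) applied to the biregular point--hyperplane incidence graph, with the nontrivial eigenvalue read off from $NN^{T}$ as in Lemma~\ref{eigen}. In this degenerate case the decomposition~(\ref{eq11}) has no $E_t$ terms (the range $k+1\le t\le 2k$ is empty when $k=0$), so $NN^{T}$ collapses to precisely your identity $MM^{\top}=\tfrac{q^{d-1}-1}{q-1}\,J+q^{d-1}I$, yielding the sharp constant $q^{(d-1)/2}$ rather than the stated $(1+o(1))q^{(d-1)/2}$.
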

By using counting arguments and the upper bound on the number of incidences between points and hyperplanes, Bennett, Iosevich, and Pakianathan \cite{iom} extended Theorem \ref{plane} to the incidences between points and $k$-planes, where a $k$-plane is defined as follows.
\begin{definition}
Let $V$ be a subset in the vector space $\F^d$. Then $V$ is a $k$-plane in $\F^d$, $k<d$, if there exist $k+1$ vectors $v_1,\ldots,v_{k+1}$ in $\F^d$  satisfying \[V=\spn\{v_1,\ldots,v_k\}+v_{k+1}, ~\rank\{v_1,\ldots,v_k\}=k.\]
\end{definition}

\begin{theorem}[\textbf{Bennett et al. }\cite{iom}]\label{io}
Let $\P$ be a set of points and $\HH$ be a set of $k$-planes in $\F^d$. Then there is no more than \[\frac{|\P||\HH|}{q^{d-k}}+(1+o(1))q^{k(d-k )/2}\sqrt{|\P||\HH|}\] incidences between the point set $\P$ and the plane set $\HH$.
\end{theorem}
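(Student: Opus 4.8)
\emph{The plan} is to derive the bound from the point--hyperplane estimate of Theorem~\ref{plane} (which is the case $k=d-1$; so assume $2\le d-k$) by a double-counting identity exploiting that a $k$-plane is an intersection of hyperplanes. For a point $p$ and a $k$-plane $V$ of $\F^d$, the hyperplanes $H$ with $V\subseteq H$ and $p\in H$ are all those through $V$ (of which there are $\frac{q^{d-k}-1}{q-1}$) when $p\in V$, and exactly those through the $(k+1)$-plane spanned by $V\cup\{p\}$ (of which there are $\frac{q^{d-k-1}-1}{q-1}$) when $p\notin V$. Writing $\HH_V$ for the set of all hyperplanes of $\F^d$ through $V$, summing over $(p,V)\in\P\times\HH$ and using $\frac{q^{d-k}-1}{q-1}-\frac{q^{d-k-1}-1}{q-1}=q^{d-k-1}$ yields the exact identity
\[
q^{d-k-1}\,I(\P,\HH)=\sum_{V\in\HH}I(\P,\HH_V)-\tfrac{q^{d-k-1}-1}{q-1}\,|\P|\,|\HH|.
\]

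\emph{Applying Theorem~\ref{plane} and bounding the error.} Put $c_H:=I(\P,\{H\})-\frac{|\P|}{q}$ for each hyperplane $H$, and $\epsilon_V:=I(\P,\HH_V)-\frac{|\P|}{q}|\HH_V|=\sum_{H\in\HH_V}c_H$. Since $|\HH_V|=\frac{q^{d-k}-1}{q-1}$ is the same for every $V$ and $\frac{|\HH_V|}{q}-\frac{q^{d-k-1}-1}{q-1}=\frac1q$, the identity above becomes
\[
I(\P,\HH)-\frac{|\P||\HH|}{q^{d-k}}=\frac1{q^{d-k-1}}\sum_{V\in\HH}\epsilon_V=\frac1{q^{d-k-1}}\sum_{H}m(H)\,c_H,
\]
where $m(H):=|\{V\in\HH:V\subseteq H\}|$ and the last sum runs over all hyperplanes of $\F^d$. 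Now $\sum_H c_H=0$, because $\frac{|\P|}{q}$ is exactly the average of $I(\P,\{H\})$ over all hyperplanes; hence if $\bar m$ denotes the average of $m$ over all hyperplanes, then $\sum_H m(H)c_H=\sum_H(m(H)-\bar m)c_H$, and Cauchy--Schwarz bounds $\big|\sum_H m(H)c_H\big|$ by $\big(\sum_H(m(H)-\bar m)^2\big)^{1/2}\big(\sum_H c_H^2\big)^{1/2}$. A direct count of the hyperplanes through one and through two points gives $\sum_H c_H^2=q^{d-1}|\P|-|\P|^2/q\le q^{d-1}|\P|$ (the quantitative content of Theorem~\ref{plane}). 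For the other factor, $\sum_H m(H)^2=\sum_{V,V'\in\HH}|\{H:V,V'\subseteq H\}|$, and since any two distinct hyperplanes of $\F^d$ are either parallel (hence disjoint) or meet in a $(d-2)$-plane, this quantity --- once corrected by the centring --- satisfies $\sum_H(m(H)-\bar m)^2\le(1+o(1))\,\alpha\,|\HH|$, where $\alpha=\binom{d-1}{k}_q q^{d-1-k}=(1+o(1))q^{(k+1)(d-1-k)}$ is the number of $k$-planes contained in a hyperplane. Combining,
\[
\Big|I(\P,\HH)-\frac{|\P||\HH|}{q^{d-k}}\Big|\le\frac{(1+o(1))}{q^{d-k-1}}\sqrt{\alpha|\HH|}\,\sqrt{q^{d-1}|\P|}=(1+o(1))\,q^{\frac{k(d-k)}{2}}\sqrt{|\P||\HH|},
\]
using $\tfrac{(k+1)(d-1-k)}{2}+\tfrac{d-1}{2}-(d-k-1)=\tfrac{k(d-k)}{2}$; this gives the theorem.

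\emph{Main obstacle.} The delicate step is the error estimate. A naive triangle inequality $\big|\sum_{V\in\HH}\epsilon_V\big|\le|\HH|\max_V|\epsilon_V|$ loses a factor of order $\sqrt{|\HH|}$, so cancellation must be exploited; the clean way is to centre $m$ about its mean before Cauchy--Schwarz. This forces a bound on $\sum_H(m(H)-\bar m)^2$ --- equivalently, control of how many pairs of $k$-planes of $\HH$ can lie in a common hyperplane --- and the estimate is clean only because the incidence geometry of hyperplanes in $\F^d$ is completely homogeneous (any two are parallel or meet in a $(d-2)$-plane), which puts the relevant Gram matrix of the $k$-plane/hyperplane containment relation into a tiny, explicitly diagonalisable algebra. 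The whole argument is in fact the expander mixing lemma in disguise: the identity above is the factorization $MB^{\mathsf T}=q^{d-k-1}N+\tfrac{q^{d-k-1}-1}{q-1}J$ of the point--$k$-plane incidence matrix $N$ through the point--hyperplane incidence matrix $M$ and the $k$-plane/hyperplane containment matrix $B$, and restricting to the orthogonal complement of the all-ones vectors reduces the second singular value of $N$ to those of $M$ (Theorem~\ref{plane}) and of $B$.
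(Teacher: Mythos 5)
Your argument is correct, and it takes a genuinely different route from this paper. The paper does not actually prove Theorem~\ref{io}: it quotes it from \cite{iom} and instead proves the generalization (Theorem~\ref{thm1}) by building the bipartite incidence graph directly, decomposing $NN^T$ into $J$, $I$ and the rank-class matrices $E_t$, and bounding the third eigenvalue before applying the expander mixing lemma. You instead bootstrap from the point--hyperplane case: the exact identity $q^{d-k-1}N=MB-\frac{q^{d-k-1}-1}{q-1}J$ (in your notation), the cancellation $\sum_H c_H=0$, and Cauchy--Schwarz with the centred multiplicities $m(H)-\bar m$. This is essentially the ``counting argument on top of Theorem~\ref{plane}'' that the paper attributes to Bennett--Iosevich--Pakianathan, so you have reconstructed the original proof rather than the spectral one; what your route buys is that it needs no eigenvalue computation for the point--$k$-plane graph itself, only the variance identity $\sum_H c_H^2=q^{d-1}|\P|-|\P|^2/q$ and a second-moment bound for the containment relation, while the paper's route gives the more general $k$-plane/$h$-plane statement in one stroke. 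I checked your computations: the identity, the constant $\frac{|\HH_V|}{q}-\frac{q^{d-k-1}-1}{q-1}=\frac1q$, the exponent arithmetic $\frac{(k+1)(d-1-k)}{2}+\frac{d-1}{2}-(d-k-1)=\frac{k(d-k)}{2}$, and the value of $\sum_H c_H^2$ are all right. The one step you state rather than prove is $\sum_H(m(H)-\bar m)^2\le(1+o(1))\alpha|\HH|$; it is true, but the clean justification is the one you gesture at in your last paragraph rather than the pairwise sum over $V,V'\in\HH$ (which does not obviously close, since the positive and negative contributions must be played off against each other). Concretely, if $B$ is the hyperplane-by-$k$-plane containment matrix, then
\[
BB^T=\alpha I+\alpha'(J-P),\qquad \alpha=x(d-1,k),\ \alpha'=x(d-2,k),
\]
where $P$ is the block matrix of the ``equal or parallel'' partition of hyperplanes into classes of size $q$; on the orthogonal complement of the all-ones vector the eigenvalues of $BB^T$ are $\alpha$ and $\alpha-q\alpha'$, both at most $\alpha$, and biregularity gives $m-\bar m\mathbf{1}=B\overline{\mathbf{1}_\HH}$, whence $\sum_H(m(H)-\bar m)^2\le\alpha|\HH|$ exactly. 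With that step made explicit the proof is complete, and it in fact yields the two-sided estimate, which is stronger than the stated upper bound.
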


In this paper, we will extend Theorem \ref{io} to the case of $k$-planes and $h$-planes with $h\ge 2k+1$ in the following theorem.
\begin{theorem}\label{thm1}
Let $\P$ be a set of $k$-planes and $\HH$ be a set of $h$-planes in $\mathbb{F}_q^d$ $(h\ge 2k+1)$. Then the number of incidences between $\P$ and $\HH$ satisfies
\[\left\vert I(\P,\HH)-\frac{|\P||\HH|}{q^{(d-h)(k+1)}}\right\vert\le \sqrt{2k+1}q^{\frac{(d-h)h+k(2h-d-k+1)}{2}}\sqrt{|\P||\HH|}. \]
\end{theorem}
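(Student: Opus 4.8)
The plan is to extend the spectral (expander mixing) method behind Theorems~\ref{vinh-pl}--\ref{io}.

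\emph{Step 1: reduction to an eigenvalue bound.} Let $\A_k$ and $\A_h$ denote the sets of all $k$-planes and all $h$-planes in $\F^d$, and let $G$ be the bipartite graph on $\A_k\sqcup\A_h$ with $P\sim H$ exactly when $P\subseteq H$. A short count with Gaussian binomial coefficients shows $G$ is biregular: translating a point of a $k$-plane $P$ to the origin, $P$ lies in exactly $D_k:=\binom{d-k}{h-k}_q$ $h$-planes, while an $h$-plane, being an $h$-dimensional affine space, contains exactly $D_h:=q^{h-k}\binom{h}{k}_q$ $k$-planes; hence $|\A_k|D_k=|\A_h|D_h$. Writing $A$ for the biadjacency matrix and $\sigma_1\ge\sigma_2\ge\cdots$ for its singular values, the all-ones vectors give $\sigma_1^2=D_kD_h$, and the expander mixing lemma for biregular bipartite graphs yields, for all $\P\subseteq\A_k$ and $\HH\subseteq\A_h$,
\[ \Bigl| I(\P,\HH)-\frac{D_k}{|\A_h|}\,|\P||\HH| \Bigr| \le \sigma_2\,\sqrt{|\P||\HH|}. \]
Since $D_k/|\A_h|=q^{-(d-h)(k+1)}\bigl(1+O(q^{-1})\bigr)$, this reproduces the claimed main term up to a lower-order error absorbed by the right-hand side, and the theorem reduces to the bound $\sigma_2^2=\lambda_2(AA^T)\lesssim(2k+1)\,q^{(d-h)h+k(2h-d-k+1)}$.

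\emph{Step 2: the structure of $AA^T$.} For $k$-planes $P,P'$ one has $(AA^T)_{P,P'}=\#\{H\in\A_h:P\cup P'\subseteq H\}=\binom{d-m}{h-m}_q$, where $m=\dim\langle P\cup P'\rangle$ is the dimension of the affine span of $P\cup P'$; here $h\ge 2k+1$ guarantees $m\le h$ for every pair, so no two $k$-planes fail to lie in a common $h$-plane. Since $m$, and in fact $(AA^T)_{P,P'}$, depends only on the relative position of the ordered pair $(P,P')$ — recorded by $\bigl(\dim(\mathrm{dir}(P)\cap\mathrm{dir}(P')),\,\mathbf 1[P\cap P'\ne\emptyset]\bigr)$, which takes $2k+2$ values — the matrix $AA^T$ lies in the Bose--Mesner algebra of the corresponding commutative, symmetric $(2k+1)$-class association scheme on $\A_k$, and hence has at most $2k+2$ eigenvalues: the Perron eigenvalue $D_kD_h$ (with eigenvector $\mathbf 1$), and at most $2k+1$ others. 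I also observe that $h\ge 2k+1$ is exactly the regime in which a spectral gap exists: a valency computation puts $\sigma_1^2/\sigma_2^2$ at order $q^{\,h-2k}/(2k+1)$, so this hypothesis is precisely what makes the estimate nontrivial.

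\emph{Step 3: bounding the non-principal eigenvalues — the main obstacle.} Each of the $\le 2k+1$ non-principal eigenvalues of $AA^T$ is a linear combination $\sum_m\binom{d-m}{h-m}_q\,p_m$ of the eigenvalues $p_m$ of the basis matrices of the scheme — equivalently a short Gauss/Kloosterman-type character sum over $\F$. Using $\binom{d-m}{h-m}_q=(1+o(1))\,q^{(h-m)(d-h)}$ together with the matching bounds on the $p_m$, I would show each of these $\le 2k+1$ terms is $\lesssim q^{(d-h)h+k(2h-d-k+1)}$ in absolute value, so that $\lambda_2(AA^T)\lesssim(2k+1)\,q^{(d-h)h+k(2h-d-k+1)}$ and $\sigma_2\lesssim\sqrt{2k+1}\,q^{((d-h)h+k(2h-d-k+1))/2}$, which is the theorem. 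The delicate part — the step I expect to be the real obstacle — is exactly this computation: one must identify the relative-position classes of pairs of \emph{affine} $k$-flats and the valencies and intersection numbers of each, diagonalise (or at least estimate the spectrum of) the resulting scheme, and then do the Gaussian-binomial and character-sum bookkeeping carefully enough that the exponent $\tfrac{(d-h)h+k(2h-d-k+1)}{2}$ comes out exactly and that each of the $2k+1$ pieces is genuinely of that order, so the factor $\sqrt{2k+1}$ — and nothing larger — appears. Steps~1 and~2 are routine by comparison.
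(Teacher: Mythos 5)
Your Step 1 is exactly the paper's setup: the same bipartite incidence graph on all $k$-planes versus all $h$-planes, the same biregularity count, and the same reduction via the expander mixing lemma to bounding the second singular value of the biadjacency matrix (equivalently the third eigenvalue $\lambda_3$ of the full adjacency matrix $M$, or the second eigenvalue of $NN^T$). The problem is that Steps 2--3 never actually prove the bound $\lambda_2(NN^T)\lesssim (2k+1)q^{(d-h)h+k(2h-d-k+1)}$: you describe a plan (identify the association scheme on pairs of affine $k$-flats, diagonalise it or estimate its spectrum via character sums, and do the bookkeeping), explicitly label that computation ``the real obstacle,'' and stop. That computation \emph{is} the theorem --- it is the entire content of the paper's Section 3 (Lemmas \ref{lm11}, \ref{lm22}, and \ref{eigen}) --- so as written the proposal has a genuine gap at its core. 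There is also an unproved structural assertion along the way: that the orbitals of pairs of affine $k$-flats form a commutative symmetric association scheme whose Bose--Mesner algebra contains $NN^T$. Nothing in your outline needs the full strength of that claim, and nothing in it is verified.

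It is worth seeing how the paper sidesteps the part you found hard, because the trick is much cheaper than diagonalisation. One shows (Lemmas \ref{lm11} and \ref{lm22}) that the number of common $h$-plane neighbors of two $k$-planes $V_1,V_2$ depends only on the single integer $t=\rank\{u_1,\ldots,u_k,v_1,\ldots,v_k,v_{k+1}-u_{k+1}\}$, i.e.\ the dimension of the affine span, with $k+1\le t\le 2k+1$ and common-neighbor count $(1+o(1))q^{(d-h)(h-t)}$ (here $h\ge 2k+1$ guarantees $t\le h$, as you noted). This gives the exact decomposition
\[NN^T=q^{(d-h)(h-2k-1)}J+\bigl(q^{(d-h)(h-k)}-q^{(d-h)(h-2k-1)}\bigr)I+\sum_{t=k+1}^{2k}c_t E_t,\qquad c_t\ge 0,\]
where $E_t$ is the adjacency matrix of the regular graph of pairs in relative position $t$. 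An eigenvector of $NN^T$ for $\lambda_3^2$ is orthogonal to $\mathbf{1}$ (Lemma \ref{lm1}), so the $J$ term vanishes, and then one uses only the trivial facts that $\lambda_{\max}$ of a sum of symmetric matrices is at most the sum of the $\lambda_{\max}$'s and that $\lambda_{\max}(E_t)\le\deg(E_t)$ for a regular graph. Hence no eigenvalue of any $E_t$ is ever computed; the only nontrivial work is the two valency counts (common-neighbor count as a function of $t$, and $\deg(E_t)=(1+o(1))q^{(t-k)(d-t+k+1)}$), after which $\lambda_3^2\le(2k+1)q^{(d-h)h+k(2h-d-k+1)}$ drops out. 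If you replace your Step 3 by this positivity-plus-degree argument and supply the two counting lemmas, your outline becomes the paper's proof; without some such substitute, the key estimate remains unestablished.
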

It follows from Theorem \ref{thm1} that if $|\P||\HH|\gtrsim q^{d(k+h)+2d+k}/q^{k^2+h^2+2h}$ then the set of incidences between $\P$ and $\HH$ is nonempty, and if $|\P||\HH|\gg q^{d(k+h)+2d+k}/q^{k^2+h^2+2h}$ then  $I(\P,\HH)$ is close to the expected number $|\P||\HH|/q^{(d-h)(k+1)}$. 

The study of incidence problems over finite fields received a considerable amount of attention in recent years, see for example \cite{ covert,  iosevichetal, gro,hr, jo, kollar, thang, solymosi, lund,tv}. 

A related question that has recently received attention is the following: Given a point set $\P$ in $\mathbb{F}_q^2$, what is the cardinality of the set of $k$-rich lines, i.e. lines contain at least $k$ points from $\P$? Note that this question is quite different from the real case.  In the real case, it follows from the Szemer\'{e}di-Trotter theorem that the number of $k$-rich lines determined by a set of $n$ points is $O(n^2/k^3)$ for any $k\ge 2$, but in the finite fields case, it follows from Theorem \ref{vinh-pl} that the number of $k$-rich lines determined by a point set $\P$ is at most $q|\P|/(k-q^{-1}|\P|)^2$ with $k>|\P|/q$. In \cite{lund}, Lund and Saraf introduced an approach to  deal with this problem. More precisely, they proved that, for any $k\ge 2$, there are at least $cq^2$ $k$-rich lines determined by a point set of cardinality $2(k-1)q$ for some constant $0<c<1$. This implies that there are at least $cq^2$ distinct lines determined by a set of $2q$ points. They also proved that 
\begin{theorem}[\textbf{Lund-Saraf} \cite{lund}]\label{lusa}
For any integer $t\ge 2$, let $\HH$ be a set of the $h$-planes in $\F^d$ of the cardinality
\[|\HH|\ge 2(t-1)q^{d-h}.\]
Then the number of points contained in at least $t$ $h$-planes from $\HH$ is at least $cq^d$, where $c=(t-1)/(t-1+2q^{h(d-h-1)})$.
\end{theorem}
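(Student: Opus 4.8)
\medskip
\noindent\textbf{A proof strategy for Theorem~\ref{lusa}.}
The plan is to deduce Theorem~\ref{lusa} from the point--$h$-plane incidence bound (Theorem~\ref{io}) by a ``deficiency'' counting argument. For $p\in\F^d$ put $m(p):=|\{W\in\HH\colon p\in W\}|$, let $X:=\{p\in\F^d\colon m(p)\ge t\}$ be the set to be bounded below, and set $Y:=\F^d\setminus X$. First I would double count incidences between $\F^d$ and $\HH$: an $h$-plane of $\F^d$ has $q^{h}$ points, so $\sum_{p\in\F^d}m(p)=\sum_{W\in\HH}|W|=|\HH|q^{h}\ge 2(t-1)q^{d}$ by the hypothesis $|\HH|\ge 2(t-1)q^{d-h}$. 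Every point of $Y$ lies in at most $t-1$ planes of $\HH$, so the points of $Y$ carry at most $(t-1)|Y|\le(t-1)q^{d}$ of these incidences; hence
\[ I(X,\HH)=\sum_{p\in X}m(p)\ \ge\ |\HH|q^{h}-(t-1)|Y|\ \ge\ (t-1)q^{d}. \]

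The second step is an upper bound for $I(X,\HH)$. By Cauchy--Schwarz, $I(X,\HH)\le|X|^{1/2}\big(\sum_{p}m(p)^{2}\big)^{1/2}$, and $\sum_{p}m(p)^{2}=\sum_{W,W'\in\HH}|W\cap W'|$ is controlled by the spectral (expander--mixing) estimate for the point--$h$-plane incidence graph that underlies Theorems~\ref{plane} and~\ref{io}: its adjacency operator has largest eigenvalue $q^{h}\binom{d}{h}_{q}$ and second eigenvalue $q^{h}\binom{d-1}{h}_{q}$, which gives
\[ \sum_{p}m(p)^{2}\ \le\ |\HH|^{2}q^{2h-d}+q^{h}\binom{d-1}{h}_{q}|\HH|. \]
Because $\binom{d-1}{h}_{q}=(1+o(1))q^{h(d-h-1)}$, it is this second term --- equivalently, the second eigenvalue --- that produces the quantity $q^{h(d-h-1)}$ in the theorem. (One could instead apply Theorem~\ref{io} directly with point set $X$.) Feeding this into Step~1 together with $|Y|=q^{d}-|X|$, and using the hypothesis in the shape $\frac{|\HH|}{q^{d-h}}-(t-1)\ge\frac12\cdot\frac{|\HH|}{q^{d-h}}>0$ so that the terms $\tfrac{|X||\HH|}{q^{d-h}}$ and $(t-1)|X|$ partly cancel, one is left with a quadratic inequality in $|X|^{1/2}$ whose coefficients are explicit in $t,q,d,h,|\HH|$; bounding $|X|$ below by (the square of) its smaller root and clearing $|\HH|$ by the hypothesis once more yields a bound of the announced form $|X|\ge (t-1)q^{d}/(t-1+2q^{h(d-h-1)})$.

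The routine part is Step~1; the delicate part is Step~2, and especially extracting the precise constant. Three points need care. First, one must use the \emph{spectral} second-moment bound above and not the elementary one $\sum_{W\neq W'}|W\cap W'|\le|\HH|^{2}q^{h-1}$ (two distinct $h$-flats meet in at most $q^{h-1}$ points): the elementary bound only gives $|X|\gtrsim q^{h+1}$, far weaker than the claim once $d$ is large, precisely because it misses the cancellation present in the spectral estimate. Second, the factors of $2$ in the final constant come from the hypothesis $|\HH|\ge 2(t-1)q^{d-h}$ and must be propagated faithfully through the quadratic; since crude estimates of its roots lose constant factors, it should be solved as exactly as possible. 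Third, one should not replace $|X|$ by $q^{d}$ before solving, so that both regimes --- $t$ small, where $|X|$ is comparable to $(t-1)q^{\,d-h(d-h-1)}$, and $t$ large, where $|X|$ is a positive proportion of $q^{d}$ --- are captured by the single closed form in the statement. (This is Lund and Saraf's theorem; the argument above is the natural ``deficiency form'' of the point--$h$-plane incidence estimate, paralleling the way Theorem~\ref{io} is used in \cite{lund}.)
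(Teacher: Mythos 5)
Two remarks before the substantive one. First, the paper does not actually prove Theorem~\ref{lusa}: it is quoted from \cite{lund}, and within this paper it is recovered as the $k=0$ case of Theorem~\ref{thm2}, i.e.\ by feeding the point--$h$-plane incidence graph (with $\deg(A)=(1+o(1))q^{(d-h)h}$, $|B|=(1+o(1))q^{(d-h)(h+1)}$ and $\lambda_3\le q^{(d-h)h/2}$) into the abstract rich-vertex lemma, Theorem~\ref{lm3}. Your argument is essentially an unpacking of that lemma in this special case, and your spectral inputs are correct: the identity $\binom{d}{h}_q=\binom{d-1}{h-1}_q+q^h\binom{d-1}{h}_q$ does give $q^h\binom{d-1}{h}_q$ as the second eigenvalue of $N^TN$, and your Step~1 and the second-moment bound are both valid. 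So the strategy is sound and does produce $\Omega(c\,q^d)$ rich points.

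The genuine gap is the constant, which the theorem specifies exactly and which your chain of inequalities does not deliver. Carry your two steps through at the extremal value $|\HH|=2(t-1)q^{d-h}$: Step~1 gives $I(X,\HH)\ge(t-1)q^d$, Step~2 gives $I(X,\HH)\le\sqrt{|X|}\sqrt{|\HH|^2q^{2h-d}+q^{h(d-h)}|\HH|}$, and the denominator evaluates to $4(t-1)^2q^d+2(t-1)q^{(d-h)(h+1)}$, so
\[
|X|\ \ge\ \frac{(t-1)^2q^{2d}}{4(t-1)^2q^d+2(t-1)q^{(d-h)(h+1)}}\ =\ \frac{(t-1)\,q^d}{4(t-1)+2q^{h(d-h-1)}},
\]
which is weaker than the stated $c=(t-1)/\bigl(t-1+2q^{h(d-h-1)}\bigr)$ by a factor that can approach $4$ (and the variant you mention, applying Theorem~\ref{io} directly to $X$ and solving the quadratic in $|X|^{1/2}$, loses even more: one gets $8(t-1)$ in place of $t-1$). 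The loss is structural, not a matter of sloppy root estimation: Cauchy--Schwarz charges the full second moment $\sum_p m(p)^2$ against $|X|$, including its ``main term'' $|\HH|^2q^{2h-d}$, and at $|\HH|=2(t-1)q^{d-h}$ that main term already costs you the factor $4=(2)^2$ coming from the hypothesis. So the sentence ``solving the quadratic as exactly as possible yields a bound of the announced form'' is the missing step, and as far as I can see it is false for this particular chain of inequalities. To land on the exact constant you need the sharper bookkeeping of Theorem~\ref{lm3} (Lund--Saraf work with the deficiency of the rich set against the \emph{centered} second moment, so that only the eigenvalue term $\lambda_3^2|\HH|$, and not the main term, appears in the denominator). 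Either reprove Theorem~\ref{lm3} and cite it, or redo Step~2 with $m(p)$ replaced by $m(p)-|\HH|q^{h-d}$; as written, your proof establishes the theorem only with a degraded constant.
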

We note that in the case $h<d-1$ and $t<q^{h(d-h-1)}$, the constant $c$ depends on $q$. This condition is necessary since, for instance, one can take a set of $2q^2$ lines in the union of two planes in $\F^3$, then the number of $2$-rich points is at most $O(q^2)$. On the other hand, it follows from Theorem \ref{lusa} for the case $d=3$ and $h=1$ that the number of $2$-rich points is at least $\Omega(q^2)$. This implies that the theorem is tight in this case. 

Using Lund and Saraf's approach and the properties of plane-incidence graphs in Section 3, we obtain generalizations of their results as follows. 

\begin{theorem}\label{thm2}
For any $t\ge 2$, let $\HH$ be a set of $h$-planes in vector space over $\mathbb{F}_q^d$ with the cardinality \[ |\HH|\ge 2(t-1)q^{(d-h)(k+1)}.\] 
Then the number of $k$-planes contained in at least $t$ $h$-planes in $\HH$ is at least $cq^{(d-k)(k+1)}$, where $~c=(t-1)/\left((t-1)+2q^{(d-h-1)(h-k)+k}\right)$.
\end{theorem}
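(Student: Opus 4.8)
The plan is to adapt Lund and Saraf's proof of Theorem~\ref{lusa}, feeding our $k$-plane/$h$-plane incidence estimate Theorem~\ref{thm1} in place of the point-incidence bound used there, and using the combinatorial facts about plane--incidence graphs from Section~3. Write $\K$ for the set of \emph{all} $k$-planes of $\F^d$, and put $N\assign|\K|$ and $\sigma\assign\#\{\,k\text{-planes contained in a fixed }h\text{-plane}\,\}$; the Gaussian-binomial estimates recorded in Section~3 give
\[
N=(1+o(1))\,q^{(d-k)(k+1)},\qquad \sigma=(1+o(1))\,q^{(h-k)(k+1)},\qquad \frac{N}{\sigma}=(1+o(1))\,q^{(d-h)(k+1)}.
\]
Let $X$ be the set of $k$-planes contained in at least $t$ members of $\HH$, and $Y\assign\K\setminus X$. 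Double counting incident pairs through the $h$-planes gives $I(\K,\HH)=\sigma|\HH|$, and trivially $I(\K,\HH)=I(X,\HH)+I(Y,\HH)$.

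First I would estimate the two pieces. Since each $k$-plane in $Y$ lies in at most $t-1$ members of $\HH$, we have $I(Y,\HH)\le (t-1)|Y|=(t-1)(N-|X|)$; and Theorem~\ref{thm1} applied to the pair $(X,\HH)$ gives
\[
I(X,\HH)\ \le\ \frac{|X|\,|\HH|}{q^{(d-h)(k+1)}}+\sqrt{2k+1}\;q^{E}\,\sqrt{|X|\,|\HH|},\qquad E\assign\frac{(d-h)h+k(2h-d-k+1)}{2}.
\]
Summing and using $I(\K,\HH)=\sigma|\HH|$, then collecting the $|X|$-terms,
\[
\sigma|\HH|-(t-1)N\ \le\ |X|\left(\frac{|\HH|}{q^{(d-h)(k+1)}}-(t-1)\right)+\sqrt{2k+1}\;q^{E}\sqrt{|X|\,|\HH|}.
\]
The hypothesis $|\HH|\ge 2(t-1)q^{(d-h)(k+1)}$ makes the coefficient of $|X|$ at least $t-1>0$ and, since $N=(1+o(1))\sigma q^{(d-h)(k+1)}$, forces $\sigma|\HH|-(t-1)N\ge (1-o(1))\sigma|\HH|/2$. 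Dividing the inequality by $\frac{|\HH|}{q^{(d-h)(k+1)}}-(t-1)$ and using $|\HH|\ge 2(t-1)q^{(d-h)(k+1)}$ wherever it is advantageous turns it into a single quadratic inequality in $u\assign\sqrt{|X|}$:
\[
u^{2}+\beta u-\gamma\ \ge\ 0,\qquad \gamma=(1+o(1))\,q^{(d-k)(k+1)},\qquad \beta^{2}=\frac{2(2k+1)}{t-1}\,q^{2E+(d-h)(k+1)}.
\]

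The algebraic crux is the identity $2E+(d-h)(k+1)=\bigl[(d-h-1)(h-k)+k\bigr]+(d-k)(k+1)$, which follows in one line from the definition of $E$; thus $\beta^{2}=\dfrac{2(2k+1)\,q^{(d-h-1)(h-k)+k}}{t-1}\,q^{(d-k)(k+1)}$, i.e.\ $\beta^{2}/\gamma\asymp q^{(d-h-1)(h-k)+k}/(t-1)$. Solving the quadratic gives $|X|\ge\bigl(\tfrac12(\sqrt{\beta^{2}+4\gamma}-\beta)\bigr)^{2}=4\gamma^{2}/(\sqrt{\beta^{2}+4\gamma}+\beta)^{2}$; plugging in the above values of $\beta$ and $\gamma$ and simplifying produces a lower bound for $|X|$ of the form $c\,q^{(d-k)(k+1)}$ with $c=(t-1)/\bigl((t-1)+2q^{(d-h-1)(h-k)+k}\bigr)$. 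Larger $|\HH|$ only helps here --- indeed $|X|\to(1-o(1))q^{(d-k)(k+1)}$ as $|\HH|/q^{(d-h)(k+1)}\to\infty$ --- so the extremal case is $|\HH|=2(t-1)q^{(d-h)(k+1)}$. (An essentially equivalent but quadratic-free route: apply Theorem~\ref{thm1} to $(Y,\HH)$ to get $I(Y,\HH)\ge |Y||\HH|/q^{(d-h)(k+1)}-\sqrt{2k+1}\,q^{E}\sqrt{|Y||\HH|}$, combine with $I(Y,\HH)\le (t-1)|Y|$ to bound $|Y|$ from above, and take $|X|=N-|Y|$.)

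The step I expect to be the main obstacle is exactly this final simplification: carrying the factor $\sqrt{2k+1}$ and the Gaussian-binomial corrections --- the discrepancies between $N,\sigma$ and their leading powers of $q$ --- through the solution of the quadratic and the optimization in $|\HH|$ so that the constant emerges cleanly in the stated closed form. Everything else --- the identity $I(\K,\HH)=\sigma|\HH|$, the trivial bound $I(Y,\HH)\le (t-1)|Y|$, the estimates for $N$ and $\sigma$, and the application of Theorem~\ref{thm1} --- is routine.
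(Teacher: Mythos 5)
Your proposal is correct in substance but takes a different route from the paper. The paper's proof is a one--liner: it feeds the eigenvalue bound of Lemma \ref{eigen} into the ready-made Lund--Saraf statement (Theorem \ref{lm3}), applied to the plane-incidence graph with $A$ the set of all $k$-planes and $B$ the set of all $h$-planes, so that $|B|/\deg(A)=(1+o(1))q^{(d-h)(k+1)}$ and $\deg(A)\mu^2=\lambda_3^2/\deg(B)\lesssim q^{(d-h-1)(h-k)+k}$. You instead bypass Theorem \ref{lm3} entirely and rebuild its conclusion for this particular graph by double counting $I(\K,\HH)=\sigma|\HH|$, splitting off the at-most-$(t-1)$-rich $k$-planes, and invoking Theorem \ref{thm1} on the rich set; since Theorem \ref{thm1} is itself the expander mixing lemma plus the same eigenvalue bound, you are in effect re-proving the Lund--Saraf lemma in situ. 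Your version is more self-contained and your exponent identity $2E+(d-h)(k+1)=\bigl[(d-h-1)(h-k)+k\bigr]+(d-k)(k+1)$ is exactly the computation the paper leaves implicit; the cost is that you must track constants by hand, and as you anticipate, the closed form $c=(t-1)/\bigl((t-1)+2q^{(d-h-1)(h-k)+k}\bigr)$ does not quite survive: solving your quadratic yields a denominator carrying an extra factor from the $\sqrt{2k+1}$ in Theorem \ref{thm1} (and a harmless absolute constant from bounding $(\sqrt{\beta^2+4\gamma}+\beta)^2$). This is not really a defect of your argument --- the paper has the same issue, since the statement of Lemma \ref{eigen} silently drops the $\sqrt{2k+1}$ that its own proof produces --- but you should state the constant as $c=(t-1)/\bigl((t-1)+O_k(q^{(d-h-1)(h-k)+k})\bigr)$ or carry the $2k+1$ explicitly rather than claim the paper's exact form.
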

\begin{theorem}\label{thm3}
For any $t\ge 2$, let $\K$ be a set of $k$-planes in vector space over $\mathbb{F}_q^d$ with the cardinality \[|\K|\ge 2(t-1)q^{(d-h)(k+1)}.\] Then  the number of $h$-planes containing  at least $t$ $k$-planes in $\K$ is at least $cq^{(d-h)(h+1)}$, where $~c=(t-1)/\left((t-1)+2q^{k(h-k+1)}\right)$.
\end{theorem}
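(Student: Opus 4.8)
The plan is to carry out Lund and Saraf's second-moment argument (as in Theorem~\ref{lusa}), with $h$-planes now in the ambient role, using the spectral properties of the $k$-plane/$h$-plane incidence graph from Section~3 — the same properties underlying Theorem~\ref{thm1}. (I take $h\ge 2k+1$ throughout, as elsewhere in the paper; this ensures the affine hull of any two members of $\K$ lies in some $h$-plane.) Let $\mathcal N$ be the set of all $h$-planes of $\mathbb{F}_q^d$, so that $N:=|\mathcal N|=q^{d-h}\binom dh_q$ and, by a standard Gaussian-binomial estimate, $q^{(d-h)(k+1)}\binom{d-k}{h-k}_q=(1+o(1))N$. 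For $W\in\mathcal N$ put $d(W):=|\{V\in\K:\ V\subseteq W\}|$, and let $r$ be the number of $W\in\mathcal N$ with $d(W)\ge t$; the goal is $r\ge c\,q^{(d-h)(h+1)}$.

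First I would compute the first two moments of $d(\cdot)$. Each $k$-plane of $\mathbb{F}_q^d$ lies in exactly $\binom{d-k}{h-k}_q$ of the $h$-planes, so
\[
S_1:=\sum_{W\in\mathcal N}d(W)=|\K|\binom{d-k}{h-k}_q\ \ge\ (2-o(1))(t-1)N ,
\]
where the inequality uses the hypothesis $|\K|\ge 2(t-1)q^{(d-h)(k+1)}$. For the second moment, $S_2:=\sum_{W\in\mathcal N}d(W)^2$ counts pairs $(V,V')\in\K^2$ weighted by the number of $h$-planes containing $V\cup V'$; writing $B$ for the $k$-plane/$h$-plane inclusion matrix and decomposing the indicator $\mathbf 1_\K$ into its projection onto the all-ones vector plus the orthogonal part, the former contributes exactly $S_1^2/N$ and the latter is at most the square of the second singular value of $B$ times $|\K|$. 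With $E:=\tfrac{(d-h)h+k(2h-d-k+1)}{2}$ as in Theorem~\ref{thm1}, this gives
\[
S_2\ \le\ \frac{S_1^{\,2}}{N}\ +\ (2k+1)\,q^{2E}\,|\K| ,
\]
and a short computation yields the identity $2E-k(h-k+1)=(d-h)(h-k)$, so that $q^{2E}\big/\binom{d-k}{h-k}_q=(1+o(1))q^{k(h-k+1)}$ and the variance term has order $S_1\cdot q^{k(h-k+1)}$.

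Next I would split off the rich $h$-planes. Let $P$ be the number of $W$ with $1\le d(W)\le t-1$; then $\sum_{d(W)\ge t}d(W)\ge S_1-(t-1)P$ and $\sum_{d(W)\ge t}d(W)^2\le S_2-P$, whence Cauchy--Schwarz on the rich $h$-planes gives $\big(S_1-(t-1)P\big)^2\le (S_2-P)\,r$. The decisive trick, just as in Lund--Saraf, is that $P\le N-r$ and that the right-hand side is monotone decreasing in $P$ on the relevant range (which follows from $S_2\ge S_1^2/N$ together with $S_1\ge 2(t-1)N$); substituting $P=N-r$ then yields the self-improving bound
\[
r\,\big(S_2-(N-r)\big)\ \ge\ \big(S_1-(t-1)(N-r)\big)^2 .
\]
Plugging in the estimates for $S_1$ and $S_2$ and solving the resulting quadratic inequality for $N-r$ gives $r\ge c\,q^{(d-h)(h+1)}$ with $c=(t-1)\big/\big((t-1)+2q^{k(h-k+1)}\big)$ — the constant $2$ being precisely the one from the hypothesis, which is what forces $S_1\ge 2(1-o(1))(t-1)N$. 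The same scheme, with $k$-planes in the ambient role, proves Theorem~\ref{thm2}.

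The main obstacle is the second-moment bound: because $\K$ is arbitrary it may be highly concentrated (say, many $k$-planes sharing a common low-dimensional plane), so all the control on $S_2$ has to come from the pseudo-randomness of the incidence graph. Pinning down the exponent $2E$, verifying $2E-k(h-k+1)=(d-h)(h-k)$, and above all confirming that the numerical constants really collapse to the stated ``$2$'' rather than to a larger $k$-dependent one — possibly only for $q$ sufficiently large — is where the genuine effort lies; the final quadratic manipulation and the monotonicity check that licenses the self-improvement are routine by comparison.
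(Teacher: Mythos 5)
Your proposal is correct and is essentially the paper's own argument: the paper proves Theorem \ref{thm3} in one line by applying the Lund--Saraf richness bound (Theorem \ref{lm3}) to the $k$-plane/$h$-plane incidence graph together with the eigenvalue bound of Lemma \ref{eigen}, and what you do is unpack the proof of Theorem \ref{lm3} (the second-moment plus Cauchy--Schwarz scheme with the spectral control of $S_2$) rather than cite it. Your exponent bookkeeping, including the identity $2E-k(h-k+1)=(d-h)(h-k)$, checks out, and the constant issue you flag (a possible $(2k+1)$ in place of the clean $2$) is in fact already present in the paper, whose statement of Lemma \ref{eigen} drops the $\sqrt{2k+1}$ factor that its own proof produces.
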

\section{Expander Mixing Lemma}

We say that a bipartite graph is \emph{biregular} if in both of its two parts, all vertices have the same degree. If $A$ is one of the two parts of a bipartite graph, we write $\deg(A)$ for the common degree of the vertices in $A$.
Label the eigenvalues so that $|\lambda_1|\geq |\lambda_2|\geq \cdots \geq |\lambda_n|$.
Note that in a bipartite graph, we have $\lambda_2 = -\lambda_1$. The following version of the expander mixing lemma is proved in \cite{eustis}. We give a detailed proof for the sake of completeness.
\begin{lemma}\label{expander} Suppose $G$ is a bipartite graph with parts $A,B$ such that the vertices in $A$ all have degree $a$ and the vertices in $B$ all have degree $b$. For any two sets $X\subset A$ and $Y\subset B$, the number of edges between $X$ and $Y$, denoted by $e(X,Y)$, satisfies
\[\left\vert e(X,Y)-\frac{a}{|B|}|X||Y|\right\vert\le \lambda_3\sqrt{|X||Y|},\] where $\lambda_3$ is the third eigenvalue of $G$.
\end{lemma}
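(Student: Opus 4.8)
The plan is to run the standard spectral argument: write $e(X,Y)$ as a quadratic form in the adjacency matrix of $G$, expand the indicator vectors of $X$ and $Y$ in an orthonormal eigenbasis, isolate the contribution of the top two eigenvalues as the ``expected'' term, and bound the remainder by Cauchy--Schwarz.

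Concretely, let $n=|A|+|B|$ and let $\A$ be the $n\times n$ adjacency matrix of $G$ with rows and columns indexed by $A\cup B$; since $G$ is bipartite it has block form $\left(\begin{smallmatrix}0&M\\ M^{T}&0\end{smallmatrix}\right)$, where $M$ is the $|A|\times|B|$ biadjacency matrix. Let $\mathbf 1_X\in\mathbb R^{n}$ be the vector equal to $1$ on $X\subset A$ and $0$ elsewhere, and define $\mathbf 1_Y$ similarly for $Y\subset B$. Because every edge of $G$ joins $A$ to $B$, we have $e(X,Y)=\mathbf 1_X^{T}\A\,\mathbf 1_Y$. Biregularity now supplies explicit top eigenvectors: since each row of $M$ sums to $a$ and each row of $M^{T}$ sums to $b$, the vectors $v_1=(\sqrt a\,\mathbf 1_A,\sqrt b\,\mathbf 1_B)$ and $v_2=(\sqrt a\,\mathbf 1_A,-\sqrt b\,\mathbf 1_B)$ satisfy $\A v_1=\sqrt{ab}\,v_1$ and $\A v_2=-\sqrt{ab}\,v_2$; as $\sqrt{ab}$ is the largest possible eigenvalue modulus, $\{\lambda_1,\lambda_2\}=\{\pm\sqrt{ab}\}$. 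Extend $v_1/\|v_1\|,\,v_2/\|v_2\|$ to an orthonormal eigenbasis $u_1,\dots,u_n$ with $\A u_i=\lambda_i u_i$ and $|\lambda_1|\ge|\lambda_2|\ge\cdots$, and write $\mathbf 1_X=\sum_i\alpha_i u_i$, $\mathbf 1_Y=\sum_i\beta_i u_i$.

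The one point requiring care is the leading term. Since $\mathbf 1_X$ is supported on $A$, where $v_1$ and $v_2$ coincide, we get $\alpha_1=\alpha_2$; since $\mathbf 1_Y$ is supported on $B$, where $v_1$ and $v_2$ differ only by a sign, we get $\beta_1=-\beta_2$. Hence in $e(X,Y)=\sum_i\lambda_i\alpha_i\beta_i$ the $i=1$ and $i=2$ terms \emph{add} rather than cancel, and a direct computation of $\alpha_1,\beta_1$ together with the identity $a|A|=b|B|=e(A,B)$ shows that $\lambda_1\alpha_1\beta_1+\lambda_2\alpha_2\beta_2=\frac{a}{|B|}|X||Y|$. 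Getting this constant exactly right --- rather than, say, $\frac{a}{2|B|}$ --- is the main (if modest) obstacle, and the place where bipartiteness forces one to track the sign pattern. For the remaining terms, $\bigl|\sum_{i\ge3}\lambda_i\alpha_i\beta_i\bigr|\le\lambda_3\sum_{i\ge3}|\alpha_i||\beta_i|\le\lambda_3\bigl(\sum_{i\ge3}\alpha_i^2\bigr)^{1/2}\bigl(\sum_{i\ge3}\beta_i^2\bigr)^{1/2}$ by Cauchy--Schwarz, using $|\lambda_i|\le\lambda_3$ for $i\ge3$, and Parseval gives $\sum_{i\ge3}\alpha_i^2\le\|\mathbf 1_X\|^2=|X|$ and $\sum_{i\ge3}\beta_i^2\le|Y|$. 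Combining the two estimates yields $\bigl|e(X,Y)-\frac{a}{|B|}|X||Y|\bigr|\le\lambda_3\sqrt{|X||Y|}$, as claimed.
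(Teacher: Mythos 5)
Your proof is correct and follows essentially the same spectral argument as the paper: both identify the top two eigenvectors $\sqrt{a}\,\mathbf{1}_A\pm\sqrt{b}\,\mathbf{1}_B$ explicitly, extract the main term $\frac{a}{|B|}|X||Y|$ as their contribution, and bound the remainder by $\lambda_3$ via Cauchy--Schwarz and Parseval. The paper packages the main term with the block all-ones matrix $K$ and an orthogonal projection onto the complement of $\operatorname{span}\{\mathbf{1}_A,\mathbf{1}_B\}$ rather than computing the coefficients $\alpha_1,\alpha_2,\beta_1,\beta_2$ directly, but that is only a difference in bookkeeping.
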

\begin{proof}
We assume that the vertices of $G$ are labeled from $1$ to $|A|+|B|$. Let $M$ be the adjacency matrix of $G$ having the form
$$
M=\left(\begin{matrix}
0&N\\
N^t&0
\end{matrix}\right),
$$ where $N$ is the $|A|\times|B|$ matrix, and $N_{ij}=1$ if and only if there is an edge between $i$ and $j$. Firstly, let us recall some properties of eigenvalues of the matrix $M$. Since all vertices in $A$ have degree $a$ and all vertices in $B$ have degree $b$, all eigenvalues of $M$ are bounds by $\sqrt{ab}$. Indeed, let us denote the $L_1$ vector norm by $||\cdot||_1$, and $\mathbf{e}_v$ an unit vector having a $1$ in the position for vertex $v$ and zeroes elsewhere. It is easy to see that $||M^2\cdot \mathbf{e}_v||_1\le ab$. Therefore, all eigenvalues of $M$ are bounded by $\sqrt{ab}$. Let $\textbf{1}_X$ denote the column vector having $1\textrm{s}$ in the positions corresponding to the set of vertices $X$ and $0\textrm{s}$ elsewhere. Then we have
\[M(\sqrt{a}\mathbf{1}_A+\sqrt{b}\mathbf{1}_B)=b\sqrt{a}\mathbf{1}_B+a\sqrt{b}\mathbf{1}_A=\sqrt{ab}(\sqrt{a}\mathbf{1}_A+\sqrt{b}\mathbf{1}_B),\]
\[M(\sqrt{a}\mathbf{1}_A-\sqrt{b}\mathbf{1}_B)=b\sqrt{a}\mathbf{1}_B-a\sqrt{b}\mathbf{1}_A=-\sqrt{ab}(\sqrt{a}\mathbf{1}_A-\sqrt{b}\mathbf{1}_B),\]
which implies that $\lambda_1=\sqrt{ab}$ and $\lambda_2=-\sqrt{ab}$ are the first and the second eigenvalues corresponding to eigenvectors $(\sqrt{a}\mathbf{1}_A+\sqrt{b}\mathbf{1}_B)$ and $(\sqrt{a}\mathbf{1}_A-\sqrt{b}\mathbf{1}_B)$, respectively.

Let $W^\perp$ be a subspace spanned by two vectors $\textbf{1}_A$ and $\textbf{1}_B$. Since $M$ is a symmetric matrix, the eigenvectors of $M$ except $\sqrt{a}\textbf{1}_A+\sqrt{b}\textbf{1}_B$ and $\sqrt{a}\textbf{1}_A-\sqrt{b}\textbf{1}_B$ span $W$. Therefore,  for any $u\in W$, we have $Mu\in W$, and $||Mu||\le \lambda_3||u||$. We have the following observations.
\begin{itemize}
\item[1.] Let $K$ be a matrix of the form $\left(\begin{matrix}
0&J\\
J&0
\end{matrix}\right),$ where $J$ is the $|A|\times |B|$ all-ones matrix. If $u\in W$, then $Ku=0$ since every row of $K$ is either $\textbf{1}_A^T$ or $\textbf{1}_B^T$.
\item[2.] If $w\in W^\perp$, then $(M-(a/|B|)K)w=0.$ Indeed, it follows from the facts that $a|A|=b|B|$, and $M\textbf{1}_A=b\textbf{1}_B=(a/|B|)K\textbf{1}_A$, $M\textbf{1}_B=a\textbf{1}_A=(a/|B|)K\textbf{1}_B$.
\end{itemize}

Since $e(X,Y)=\textbf{1}_Y^TM\textbf{1}_X$ and $|X||Y|=\textbf{1}_Y^TK\textbf{1}_X$, \[\left|e(X,Y)-\frac{a}{|B|}|X||Y|\right|=\left|\textbf{1}_Y^T(M-\frac{a}{|B|}K)\textbf{1}_X\right|.\]
For any vector $v$, let $\bar{v}$ denote the orthogonal projection onto $W$, so that $\overline{v}\in W$, and $v-\overline{v}\in W^\perp$. Thus \[\textbf{1}_Y^T(M-\frac{a}{|B|}K)\textbf{1}_X=\textbf{1}_Y^T(M-\frac{a}{|B|}K)\overline{\textbf{1}_X}=\textbf{1}_Y^TM\overline{\textbf{1}_X}=\overline{\textbf{1}_Y}^T M \overline{\textbf{1}_X}.\] Therefore,
\[\left|e(X,Y)-\frac{a}{|B|}|X||Y|\right|\le \lambda_3||\overline{\textbf{1}_X}||~||\overline{\textbf{1}_Y}||.\]

Since \[ \overline{\textbf{1}_X}=\textbf{1}_X-\left((\textbf{1}_X\cdot \textbf{1}_A)/(\textbf{1}_A\cdot \textbf{1}_A)\right)\textbf{1}_A=\textbf{1}_X-(|X|/|A|)\textbf{1}_A,\] we have $||\overline{\textbf{1}_X}||=\sqrt{|X|(1-|X|/|A|)}$. Similarly, $||\overline{\textbf{1}_Y}||=\sqrt{|Y|(1-|Y|/|B|)}$.

In other words,
\[\left|e(X,Y)-\frac{a}{|B|}|X||Y|\right|\le \lambda_3\sqrt{|X||Y|(1-|X|/|A|)(1-|Y|/|B|)},\]
and the lemma follows.
\end{proof}
\begin{lemma}\label{lm1}
Let $G$ be a biregular  graph with parts $A,B$ and $|A|=m$, $|B|=n$. We label vertices of $G$ from $1$ to $|A|+|B|$. Let $M$ be the adjacency matrix of $G$ having the form
$$
M=\left(\begin{matrix}
0&N\\
N^t&0
\end{matrix}\right),
$$ where $N$ is the $|A|\times|B|$ matrix, and $N_{ij}=1$ if and only if there is an edge between $i$ and $j$. Let $v_3=(v_1,\ldots,v_m, u_1,\ldots, u_n)$ be an eigenvector of $M$ corresponding to the eigenvalue $\lambda_3$. Then we have $(v_1,\ldots,v_m)$ is an eigvenvector of $NN^T$, and $J(v_1,\ldots,v_m)=0$, where $J$ is the $m\times m$ all-ones matrix.
\end{lemma}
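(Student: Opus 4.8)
The plan is to split the eigenvector equation $Mv_3=\lambda_3 v_3$ into its two blocks. Write $\mathbf{v}=(v_1,\ldots,v_m)^T$ and $\mathbf{u}=(u_1,\ldots,u_n)^T$. The block form of $M$ immediately gives the coupled system $N\mathbf{u}=\lambda_3\mathbf{v}$ and $N^T\mathbf{v}=\lambda_3\mathbf{u}$. Substituting the second identity into the first yields $NN^T\mathbf{v}=\lambda_3 N\mathbf{u}=\lambda_3^2\mathbf{v}$, so $\mathbf{v}$ is an eigenvector of $NN^T$, with eigenvalue $\lambda_3^2$, provided $\mathbf{v}\neq 0$. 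To see $\mathbf{v}\neq 0$: if $\mathbf{v}=0$, then the relation $N^T\mathbf{v}=\lambda_3\mathbf{u}$ forces $\lambda_3\mathbf{u}=0$, and since $v_3\neq 0$ we would need $\mathbf{u}\neq 0$ and hence $\lambda_3=0$; thus as long as $\lambda_3\neq 0$, which holds for all the plane-incidence graphs considered in Section~3, we conclude $\mathbf{v}\neq 0$.

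For the second assertion, $J\mathbf{v}=0$, I would invoke the orthogonality of eigenvectors of the symmetric matrix $M$ together with the description of the two leading eigenvectors obtained in the proof of Lemma~\ref{expander}. There it is shown that $\lambda_1=\sqrt{ab}$ and $\lambda_2=-\sqrt{ab}$ with eigenvectors $\sqrt{a}\mathbf{1}_A+\sqrt{b}\mathbf{1}_B$ and $\sqrt{a}\mathbf{1}_A-\sqrt{b}\mathbf{1}_B$, and that every remaining eigenvector lies in $W$, the orthogonal complement of $W^\perp=\operatorname{span}\{\mathbf{1}_A,\mathbf{1}_B\}$. Since $\lambda_3\neq\pm\sqrt{ab}$, the eigenvector $v_3$ may be taken in $W$, and in particular $v_3\cdot\mathbf{1}_A=0$. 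As $\mathbf{1}_A$ has its ones exactly in the first $m$ coordinates, $v_3\cdot\mathbf{1}_A=\sum_{i=1}^m v_i$, so $\sum_{i=1}^m v_i=0$. Every entry of $J\mathbf{v}$ equals $\sum_{i=1}^m v_i$, hence $J\mathbf{v}=0$, as claimed.

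The only real subtlety is the degenerate-case analysis: one must assume $\lambda_3\neq 0$ (otherwise $\mathbf{v}$ could vanish and the first conclusion becomes vacuous) and $|\lambda_3|<\sqrt{ab}$, equivalently that $v_3$ can be chosen orthogonal to the two leading eigenvectors. Both hold automatically for the incidence graphs treated later, where $\lambda_3$ is a strictly smaller power of $q$ than $\lambda_1$. Modulo these standing hypotheses the argument is just the block computation followed by one orthogonality relation, so I do not expect any genuine obstacle.
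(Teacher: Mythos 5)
Your proposal is correct and follows essentially the same route as the paper: the block substitution $NN^T\mathbf{v}=\lambda_3 N\mathbf{u}=\lambda_3^2\mathbf{v}$ is just the paper's computation of $M^2v_3$ written out, and both arguments obtain $J\mathbf{v}=0$ from the fact (established in the proof of Lemma~\ref{expander}) that $v_3$ lies in the complement of $\operatorname{span}\{\mathbf{1}_A,\mathbf{1}_B\}$. Your explicit handling of the degenerate cases $\mathbf{v}=0$ and $\lambda_3=0$ is a small but welcome refinement that the paper leaves implicit.
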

\begin{proof}
We have 
\[
M^2=\left(\begin{matrix}
NN^T&0\\
0&N^TN
\end{matrix}\right).\]
Since $v_3$ is an eigenvector of $M$ with eigenvalue $\lambda_3$, $v_3$ is also an eigenvector of $M^2$ with the eigenvalue $\lambda_3^2$. On the other hand,

\[
M^2v_3=\left(\begin{matrix}
NN^T\cdot(v_1,\ldots,v_m)^T\\
N^TN\cdot (u_1,\ldots,u_n)^T
\end{matrix}\right)=\lambda_3^2(v_1,\ldots,v_m,\\u_1,\ldots,u_n)^T.\]
This implies that $(v_1,\ldots,v_m)$ is an eigenvector of $NN^T$ corresponding to the eigenvalue $\lambda_3^2$. We also note that it follows from proof of Lemma \ref{expander} that if $v_3$ is an eigenvector  corresponding the third eigenvalue of $M$, then $Kv_3=0$, which implies that $J(v_1,\ldots,v_m)=0$. We also note that $\lambda_3^2$ is the second eigenvalue of $NN^T$.
\end{proof}
It follows from Lemma \ref{lm1} that in order to bound the third eigenvalue of $M^2$, it suffices to bound the second eigenvalue of the matrix $NN^T$.

Suppose that $G=(A,B,E)$ is a bipartite graph as in Lemma \ref{expander}. For any set $S$ of vertices in $A$, we denote  the set of vertices in $B$ that have at least $t$ neighbors in $S$ by $R_t(S)$. Similarly, we have the definition of $R_t(S)$ with $S\subset B$. In \cite{lund}, Lund and Saraf proved the following theorem. 
\begin{theorem}\label{lm3}
If a set $S$ of vertices in $B$ such that 
$|S|\ge 2(t-1)|B|/\deg(A)$, then $|R_t(S)|\ge c|A|$, where $c=(t-1)/(t-1+2\deg(A)\mu^2)$, $\mu=\lambda_3/\lambda_1$.
\end{theorem}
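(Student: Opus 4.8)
The plan is to run a double-counting argument against the \emph{poor} set $T\assign A\setminus R_t(S)$, every vertex of which has at most $t-1$ neighbours in $S$; if $T=\emptyset$ then $R_t(S)=A$ and there is nothing to prove, so assume $|T|\ge 1$. Counting edges between $T$ and $S$ from the $T$-side gives the trivial upper bound $e(T,S)\le (t-1)|T|$. For a matching lower bound I would invoke Lemma \ref{expander} with $X=T$ and $Y=S$, but in the \emph{sharp} form that comes out of its proof, namely
\[
e(T,S)\ \ge\ \frac{\deg(A)}{|B|}\,|T||S|-\lambda_3\sqrt{\,|T||S|\Bigl(1-\tfrac{|T|}{|A|}\Bigr)\Bigl(1-\tfrac{|S|}{|B|}\Bigr)}\,,
\]
and then rewrite $1-|T|/|A|=|R_t(S)|/|A|$ and discard the harmless factor $1-|S|/|B|\le 1$.

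Comparing the two bounds yields
\[
\Bigl(\tfrac{\deg(A)|S|}{|B|}-(t-1)\Bigr)\,|T|\ \le\ \lambda_3\sqrt{\,|T||S|\cdot\tfrac{|R_t(S)|}{|A|}}\,.
\]
The hypothesis $|S|\ge 2(t-1)|B|/\deg(A)$ is tailored precisely so that $\deg(A)|S|/|B|\ge 2(t-1)$, whence the bracket on the left is at least $\deg(A)|S|/(2|B|)$. Squaring, cancelling one factor of $|T|$, and then substituting the size hypothesis a second time, the expression collapses — after using the biregularity identity $\deg(A)|A|=\deg(B)|B|$ and $\mu^2=\lambda_3^2/\bigl(\deg(A)\deg(B)\bigr)$ — to the linear bound
\[
|T|\ \le\ \frac{2\deg(A)\mu^2}{t-1}\,|R_t(S)|.
\]
Since $|A|=|R_t(S)|+|T|$, this says exactly $|R_t(S)|\bigl(1+\tfrac{2\deg(A)\mu^2}{t-1}\bigr)\ge |A|$, i.e. $|R_t(S)|\ge c|A|$ with $c=(t-1)/(t-1+2\deg(A)\mu^2)$.

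The one genuinely delicate point — and the reason I would be careful to use Lemma \ref{expander} in its sharp form — is keeping the factor $\bigl(1-|T|/|A|\bigr)=|R_t(S)|/|A|$ inside the square root. If instead one uses the coarser estimate $e(T,S)\ge \tfrac{\deg(A)}{|B|}|T||S|-\lambda_3\sqrt{|T||S|}$, the same manipulations give only $|T|\le \tfrac{2\deg(A)\mu^2}{t-1}|A|$ and hence the weaker conclusion $|R_t(S)|\ge \bigl(1-\tfrac{2\deg(A)\mu^2}{t-1}\bigr)|A|$; it is precisely because the refined right-hand side is proportional to $\sqrt{|R_t(S)|}$, so that squaring makes it \emph{linear} in $|R_t(S)|$, that the inequality rearranges into the sharp constant $c$. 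Everything else is routine bookkeeping with the two elementary identities relating $\deg(A),\deg(B),|A|,|B|,\lambda_1$, together with an appropriate use of the size hypothesis $|S|\ge 2(t-1)|B|/\deg(A)$.
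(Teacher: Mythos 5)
Your proof is correct. Note that the paper itself contains no proof of Theorem \ref{lm3} --- it is quoted from Lund and Saraf \cite{lund} --- so there is no internal argument to compare against; your write-up in effect supplies the omitted proof, and it is essentially the standard one. The two-sided counting of $e(T,S)$ against the poor set $T=A\setminus R_t(S)$, the use of the hypothesis $|S|\ge 2(t-1)|B|/\deg(A)$ once to absorb the $(t-1)$ term and once more after squaring, and the identities $\deg(A)|A|=\deg(B)|B|$, $\lambda_1^2=\deg(A)\deg(B)$ all check out and yield exactly $|T|\le \frac{2\deg(A)\mu^2}{t-1}|R_t(S)|$, hence $c=(t-1)/(t-1+2\deg(A)\mu^2)$. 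You are also right about the one delicate point: the sharp constant requires the refined form of Lemma \ref{expander} with the factor $1-|T|/|A|=|R_t(S)|/|A|$ retained inside the square root (which is legitimate, since that form is exactly what the paper's proof of Lemma \ref{expander} establishes before the final relaxation), so that squaring turns the error term into something linear in $|R_t(S)|$ rather than in $|A|$.
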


\section{Plane-incidence graphs}

We now construct the plane-incidence graph $G_P=(A,B,E)$ as follows. The first vertex part is the set of all $k$-planes, and the second vertex part is the set of all $h$-planes. There is an edge between a $k$-plane $v$ and a $h$-plane $p$ if $v$ lies on $p$.
It is easy to check that 
\[|A|=\frac{q^d(q^d-1)\cdots (q^d-q^{k-1})}{q^k(q^k-1)\cdots (q^k-q^{k-1})},~ |B|=\frac{q^d(q^d-1)\cdots (q^d-q^{h-1})}{q^h(q^h-1)\cdots (q^h-q^{h-1})}.\]
Now, we will count the degree of each vertex of the graph $G_P$. We first need the following lemmas. 
\begin{lemma}\label{lm11}
Let $U=\spn\{u_1,\ldots,u_k\}+u_{k+1}$ and $V=\spn\{v_1,\ldots,v_k\}+v_{k+1}$ be two $k$-planes in $\mathbb{F}_q^d$. Then $U\equiv V$ if and only if $\spn\{u_1,\ldots,u_k\}\equiv \spn\{v_1,\ldots,v_k\}$ and $u_{k+1}\in V, v_{k+1}\in U$.
\end{lemma}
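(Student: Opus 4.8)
The plan is to use the standard description of a $k$-plane as a coset of a linear subspace, together with the observation that the underlying linear ``direction'' subspace is intrinsic to the plane. Write $W_U\assign\spn\{u_1,\ldots,u_k\}$ and $W_V\assign\spn\{v_1,\ldots,v_k\}$, so that as subsets of $\F^d$ we have $U=W_U+u_{k+1}$ and $V=W_V+v_{k+1}$, with $\dim W_U=\dim W_V=k$ by the rank conditions in the definition.

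For the forward implication, I would first record that $W_U$ can be recovered from $U$ alone by the formula $W_U=\{x-y:x,y\in U\}$. Indeed, any two elements of $U$ have the form $w+u_{k+1}$ and $w'+u_{k+1}$ with $w,w'\in W_U$, so their difference $w-w'$ lies in $W_U$; conversely every $w\in W_U$ equals $(w+u_{k+1})-u_{k+1}$, a difference of two elements of $U$. Hence, if $U\equiv V$, applying the same set-theoretic formula to $V$ gives $W_V=\{x-y:x,y\in V\}=\{x-y:x,y\in U\}=W_U$, which is the first asserted condition, i.e. $\spn\{u_1,\ldots,u_k\}\equiv\spn\{v_1,\ldots,v_k\}$. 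The two remaining conditions are then immediate: $u_{k+1}=0+u_{k+1}\in U=V$ and similarly $v_{k+1}\in V=U$.

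For the backward implication, assume $W_U\equiv W_V\eqqcolon W$ and (it suffices to use one of the two hypotheses) that $u_{k+1}\in V=W+v_{k+1}$. Then $u_{k+1}-v_{k+1}\in W$, so the two cosets coincide: $U=W+u_{k+1}=W+v_{k+1}=V$, as desired.

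I do not anticipate any real obstacle here: the argument is pure coset bookkeeping for linear subspaces, and the only step needing a sentence of justification is the intrinsic characterization $W_U=\{x-y:x,y\in U\}$ of the direction space, which ensures the decomposition of a $k$-plane into a linear part and a translation part is well defined independently of the chosen spanning vectors. The identical reasoning, with $k$ replaced by $h$, will apply to $h$-planes and is what one needs in order to count degrees in the incidence graph $G_P$.
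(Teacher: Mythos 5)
Your proof is correct and follows essentially the same route as the paper: both reduce the lemma to coset bookkeeping, and your intrinsic characterization $W_U=\{x-y:x,y\in U\}$ is just a cleaner, direct packaging of the paper's computation (the paper argues by contradiction, writing $u_i=(u_i+u_{k+1})-u_{k+1}$ as a difference of two elements of $V$). The only cosmetic difference is that you prove the span equality directly rather than by contradiction, and you correctly observe that only one of the two conditions $u_{k+1}\in V$, $v_{k+1}\in U$ is needed for the converse.
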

\begin{proof}
If $\spn\{u_1,\ldots,u_k\}\equiv\spn\{v_1,\ldots,v_k\}$ and $u_{k+1}\in V, v_{k+1}\in U$, then it is easy to check that $U\equiv V$. For the inverse case, if $U\equiv V$, then $u_{k+1}\in V$ and $v_{k+1}\in U$. We need to prove that $\spn\{u_1,\ldots,u_k\}\equiv\spn\{v_1,\ldots,v_k\}$. Indeed, without loss of generality, we assume that there exists an element $u_i$ for some $1\le i\le k$ such that $u_i\not\in \spn\{v_1,\ldots,v_k\}$, then we will prove that this leads to a contradiction. Since $U\equiv V$, $u_i+u_{k+1}\in V$. Therefore, there eixst elements $a_1,\ldots,a_k\in \mathbb{F}_q$ such that $u_i+u_{k+1}=\sum_{i=1}^ka_iv_i+v_{k+1}.$ On the other hand, since $u_{k+1}\in V$, there exist elements $b_1,\ldots,b_k\in \mathbb{F}_q$ such that $u_{k+1}=\sum_{i=1}^kb_iv_i+v_{k+1}.$ This implies that 
$u_i=\sum_{i=1}^k(a_i-b_i)v_i,$
which leads to a contradiction. \end{proof}

\begin{lemma}\label{lm22}
Let $U=\spn\{u_1,\ldots,u_k\}+u_{k+1}$, $V=\spn\{v_1,\ldots,v_k\}+v_{k+1}$ be two $k$-planes in $\mathbb{F}_q^d$. For any $h>k$, if the $h$-plane $H=\spn\{t_1,\ldots,t_h\}+t_{h+1}$ contains both of them then $H$ can be written as $H=\spn\{t_1,\ldots,t_h\}+u_{k+1}$ and  $u_1,\ldots,u_k, v_1,\ldots,v_k,u_{k+1}-v_{k+1}\in \spn\{t_1,\ldots,t_h\}$.
\end{lemma}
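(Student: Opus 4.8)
The plan is to rely on the single elementary fact that an affine subspace can be re-based at any of its points: if $W$ is a linear subspace of $\F^d$ and $p\in W+t$, then $W+t=W+p$ (write $p=w_0+t$, so $t=p-w_0$, and for any $w\in W$ one has $w+t=(w-w_0)+p$, and conversely). Once $H$ is rewritten with $u_{k+1}$ as its translation vector, membership of a point $x$ in $H$ becomes equivalent to $x-u_{k+1}\in\spn\{t_1,\ldots,t_h\}$, and both claims drop out by expressing the defining points of $U$ and $V$ in coordinates relative to $H$.

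First I would observe that $u_{k+1}\in U$ (take the zero linear combination of $u_1,\ldots,u_k$), and since $U\subseteq H$ this gives $u_{k+1}\in H=\spn\{t_1,\ldots,t_h\}+t_{h+1}$. Re-basing $H$ at the point $u_{k+1}$ then yields $H=\spn\{t_1,\ldots,t_h\}+u_{k+1}$, which is the first assertion of the lemma.

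Next, working with this new description of $H$: for each $i\le k$ the point $u_i+u_{k+1}$ lies in $U\subseteq H$, so $u_i+u_{k+1}=\sum_{j=1}^h c_j t_j+u_{k+1}$ for suitable $c_j\in\F$, whence $u_i=\sum_j c_j t_j\in\spn\{t_1,\ldots,t_h\}$. For the vectors $v_i$, note that both $v_{k+1}$ and $v_i+v_{k+1}$ lie in $V\subseteq H$; writing each of them as an element of $\spn\{t_1,\ldots,t_h\}$ plus $u_{k+1}$ and subtracting the two resulting identities gives $v_i\in\spn\{t_1,\ldots,t_h\}$, while the identity for $v_{k+1}$ by itself gives $v_{k+1}-u_{k+1}\in\spn\{t_1,\ldots,t_h\}$, i.e. $u_{k+1}-v_{k+1}\in\spn\{t_1,\ldots,t_h\}$. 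Together these establish the second assertion.

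I do not expect a genuine obstacle here; the only point needing care is bookkeeping — keeping straight which vector currently serves as the translation of $H$ and which points genuinely belong to which plane, so that the equivalence "$x\in H\iff x-u_{k+1}\in\spn\{t_1,\ldots,t_h\}$" is applied to the right points. The hypothesis $h>k$ plays no role in the argument itself; it is only there to keep the statement non-vacuous within the setup of the following degree computations.
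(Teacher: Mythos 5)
Your proof is correct and follows essentially the same route as the paper: both first establish the re-basing fact that $H=\spn\{t_1,\ldots,t_h\}+x$ for any $x\in H$, apply it with $x=u_{k+1}$, and then read off the membership of $u_i$, $v_i$, and $u_{k+1}-v_{k+1}$ in $\spn\{t_1,\ldots,t_h\}$ from the containments $U,V\subseteq H$. You merely spell out the subtraction steps that the paper labels ``easy to see.''
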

\begin{proof}
First we need to prove that for any vector $x\in H$, $H$ can be written as $H=\spn\{t_1,\ldots,t_h\}+x$. Indeed, since $x\in H$, $x$ can be presented as $x=\sum_{i=1}^ha_it_i+t_{h+1}$ with $a_i\in \mathbb{F}_q$. Let $y=\sum_{i=1}^hb_it_i+t_{h+1}$ be a vector in $ H$, then $y$ can also be written as $y=\sum_{i=1}^h(b_i-a_i)t_i+x$. This implies that $y\in \spn\{t_1,\ldots,t_h\}+x$. The inverse case $\spn\{t_1,\ldots,t_h\}+x\subset H$ is trivial.

If $H$ contains both $U$ and $V$, then $H$ can be presented as $H=\spn\{t_1,\ldots,t_h\}+u_{k+1}$ since $u_{k+1}\in H$. It is easy to see that $u_i\in \spn\{t_1,\ldots,t_h\}$ for all $1\le i\le k$. Since $V$ is contained in $H$, $v_{k+1}\in H$, which implies that $v_{k+1}-u_{k+1}\in \spn\{t_1,\ldots,t_h\}$, and $v_i\in \spn\{t_1,\ldots,t_h\}$ for all $1\le i\le k$.
\end{proof}

Using Lemma \ref{lm11} and Lemma \ref{lm22}, we obtain that the degree of each $h$-plane is 
\[\frac{q^h}{q^k}\prod_{i=0}^{k-1}\frac{q^h-q^i}{q^k-q^i}=(1+o(1))q^{(h-k)(k+1)}.\]
In order to count the degree of each $k$-plane, we will use similar arguments as in the proof of \cite[Theorem 2.3]{iom}. Let $x(h,k)$ be the numer of distinct $k$-planes in a $h$-plane. Let $y(h,k)$ be the number of distinct $h$-planes in $\mathbb{F}_q^d$ containing a fixed $k$-plane. Then we have 
\[y(h,k)=\frac{x(h,k)x(d,h)}{x(d,k)}.\]
On the other hand, we just proved that
\[x(h,k)=\frac{q^h}{q^k}\prod_{i=0}^{k-1}\frac{q^h-q^i}{q^k-q^i},\]
which implies that
\[y(h,k)=\prod_{i=k}^{h-1}\frac{q^{d-i}-1}{q^{h-i}-1}=(1+o(1))q^{(d-h)(h-k)}.\]
In short, the degree of each $k$-plane is $(1+o(1))q^{(d-h)(h-k)}$.
We are now ready to bound the third eigenvalue of $M$ in the following lemma.
\begin{lemma}\label{eigen}
The third eigenvalue of $M$ is bounded by $q^{\left((d-h)h+k(2h-d-k+1)\right)/2}$.
\end{lemma}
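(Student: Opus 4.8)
By Lemma 2.6, the third eigenvalue $\lambda_3$ of $M$ satisfies $\lambda_3^2 = $ (second eigenvalue of $NN^T$), where $N$ is the $|A| \times |B|$ biadjacency matrix of the plane-incidence graph $G_P$. So the plan is to analyze the $|A| \times |A|$ matrix $NN^T$ directly. Its $(U,V)$ entry, for $k$-planes $U,V$, counts the number of $h$-planes containing both $U$ and $V$; call this quantity $N(U,V)$. The diagonal entry $N(U,U)$ is just the degree of a $k$-plane, namely $(1+o(1))q^{(d-h)(h-k)}$, computed above. For the off-diagonal entries I would use Lemma 3.3 (Lemma \ref{lm22}): an $h$-plane $H$ containing both $U = \spn\{u_1,\dots,u_k\}+u_{k+1}$ and $V = \spn\{v_1,\dots,v_k\}+v_{k+1}$ must have the form $H = \spn\{t_1,\dots,t_h\}+u_{k+1}$ with $u_1,\dots,u_k,v_1,\dots,v_k,u_{k+1}-v_{k+1} \in \spn\{t_1,\dots,t_h\}$. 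Thus $H$ corresponds to an $h$-dimensional subspace containing the fixed subspace $W_{U,V} := \spn\{u_1,\dots,u_k,v_1,\dots,v_k,u_{k+1}-v_{k+1}\}$ (through a common point $u_{k+1}$), so $N(U,V)$ depends only on $r := \dim W_{U,V}$, and equals the number $y(h,r)$ of $h$-planes through a fixed $r$-plane — roughly $q^{(d-h)(h-r)}$. The key case distinction is $r \in \{k+1, k+2, \dots, 2k+1\}$; when $U$ and $V$ are "in general position" $r = 2k+1$ and the count is smallest.

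Next I would estimate the second eigenvalue of $NN^T$ via its trace. Since $\lambda_1(NN^T) = a\,b$ (the product of the two degrees, as $\lambda_1(M) = \sqrt{ab}$) with eigenvector the all-ones vector on $A$, and all other eigenvalues are at most $\lambda_3^2$ in absolute value, one has
\[
\lambda_1(NN^T)^2 + (|A|-1)\lambda_3^4 \;\ge\; \sum_i \lambda_i(NN^T)^2 \;=\; \operatorname{tr}\big((NN^T)^2\big) \;=\; \sum_{U,V} N(U,V)^2,
\]
but a cleaner route, and the one I would actually pursue, is to bound $\lambda_3^2$ using $\operatorname{tr}(NN^T) = \sum_U N(U,U) = |A|\cdot a$ together with a second-moment computation, or more directly to observe that $NN^T - \frac{ab}{|A|}J$ is the restriction of $NN^T$ to $W$ (the orthogonal complement of the all-ones vector, by Lemma \ref{lm1}), and bound its spectral norm. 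The workhorse is: write $NN^T = D \cdot I + (\text{small-rank-ish correction}) + E$, where $D = N(U,U)$ is the diagonal value and $E$ collects off-diagonal terms. The dominant off-diagonal contribution comes from pairs with the smallest $r = 2k+1$ (there are the most of them and their common count $q^{(d-h)(h-2k-1)}$, while individually small, is attached to an all-ones-like block); I would show this block is essentially a multiple of $J$ (hence killed on $W$), and that the genuinely "bad" part — pairs with $r < 2k+1$, i.e. $U,V$ in special position — is small enough in number that the corresponding submatrix has small norm. Collecting: on $W$, $\|NN^T\| \lesssim D + (\text{correction}) = (1+o(1))q^{(d-h)(h-k)} + \text{lower order}$... wait, that is not yet $q^{(d-h)h+k(2h-d-k+1)}$, so the correct bookkeeping must show the diagonal term $D$ itself is cancelled on $W$ up to the claimed order. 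Concretely: $NN^T\mathbf 1_A = ab\,\mathbf 1_A$, and $NN^T = D\cdot I + \sum_{r=k+1}^{2k+1}(\text{count}_r)\cdot(\text{incidence block}_r)$; the $r=2k+1$ block is $q^{(d-h)(h-2k-1)}(J - \text{stuff})$, and matching $NN^T\mathbf 1_A = ab\mathbf1_A$ forces the combination $D - q^{(d-h)(h-2k-1)}\cdot(\#\text{generic }V) $ to be the eigenvalue, leaving the residual — controlled by the $r \le 2k$ blocks — of size $q^{(d-h)h + k(2h-d-k+1)}$ after taking square root.

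The main obstacle, and where I would spend the most care, is the precise estimate of $\lambda_3^2$ from the structure of $NN^T$: one must identify exactly which "off-diagonal block" survives the projection onto $W$ and show the surviving part has norm $\le q^{(d-h)h + k(2h-d-k+1)}$. This requires (i) an exact count, for each $r$ with $k+1 \le r \le 2k+1$, of the number of $k$-planes $V$ with $\dim W_{U,V} = r$ for fixed $U$ — a Gaussian-binomial computation using Lemmas \ref{lm11}–\ref{lm22} — and (ii) recognizing that summing $N(U,\cdot)$ over all $V$ must reproduce the row sum $ab$ of $NN^T$, which pins down the eigenvalue on $W$ via a telescoping/inclusion argument. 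The hypothesis $h \ge 2k+1$ enters precisely here: it guarantees $r = 2k+1 \le h$ is attainable, so generic pairs are indeed contained in a common $h$-plane and the "main term" block is the full $J$-like block, which is exactly what gets annihilated on $W$; without $h \ge 2k+1$ the combinatorics of which $r$ is generic changes and the eigenvalue bound would take a different form. Once the surviving block's norm is bounded by $q^{(d-h)h + k(2h-d-k+1)}$, taking square roots gives $\lambda_3 \le q^{((d-h)h + k(2h-d-k+1))/2}$, as claimed; the earlier degree computations $a = (1+o(1))q^{(d-h)(h-k)}$ and $b = (1+o(1))q^{(h-k)(k+1)}$ then feed directly into Theorem \ref{thm1} via Lemma \ref{expander}.
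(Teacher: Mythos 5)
Your plan follows the paper's proof essentially verbatim: reduce to the second eigenvalue of $NN^T$ via Lemma \ref{lm1}, then decompose $NN^T$ according to the rank $t$ of $\{u_1,\ldots,u_k,v_1,\ldots,v_k,u_{k+1}-v_{k+1}\}$ into a multiple of $J$ (the generic $t=2k+1$ block, annihilated on the orthogonal complement of the all-ones vectors), a multiple of $I$, and regular ``special-position'' adjacency matrices $E_t$ for $k+1\le t\le 2k$ whose spectral norms are bounded by their degrees --- exactly the paper's equation (\ref{eq11}) and the subsequent degree counts. One small correction to your bookkeeping: the diagonal term $q^{(d-h)(h-k)}$ does not need to be ``cancelled'' on $W$, since $(d-h)h+k(2h-d-k+1)-(d-h)(h-k)=k(h-k+1)>0$ shows it is already dominated by the claimed bound; the genuinely dominant contribution is the $t=2k$ block, which is where your plan (and the paper) correctly lands.
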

\begin{proof}
Let $M$ be the adjacency matrix of $G_P$, which has the form 
$$M=\begin{bmatrix}
0&N\\
N^T&0
\end{bmatrix},
$$
where $N$ is a $q^{(d-k)(k+1)}\times q^{(d-h)(h+1)}$ matrix, and $N_{vp}=1$ if $v\in p$, and zero otherwise. Therefore, 
$$M^2=\begin{bmatrix}
NN^T&0\\
0&N^TN
\end{bmatrix}.
$$
It follows from Lemma \ref{lm1} that it suffices to bound the second eigenvalue of $NN^T$. Given any two $k$-planes $V_1=\spn\{u_1,\ldots,u_k\}+u_{k+1}$ and $V_2=\spn\{v_1,\ldots,v_k\}+v_{k+1}$, we now count the number of their common neighbors, i.e. the number of $h$-planes containing both of them.  We assume that $H=\spn\{t_1,\ldots,t_h\}+t_{h+1}$ is a $h$-plane supporting $V_1$ and $V_2$. Then it follows from Lemma \ref{lm11} and Lemma \ref{lm22} that $H$ can be written as $H=\spn\{t_1,\ldots,t_h\}+u_{k+1}$ and $u_1,\ldots,u_k,v_1,\ldots,v_k, u_{k+1}-v_{k+1}\in \spn\{t_1,\ldots,t_h\}$. Thus the number of $h$-planes supporting $V_1$ and $V_2$ depends on the rank of the following system of vectors $\rank(V_1,V_2):=\{u_1,\ldots,u_k,v_1,\ldots,v_k,v_{k+1}-u_{k+1}\}$. We also note that $k+1\le \rank (V_1,V_2)\le 2k+1$ since $V_1$ and $V_2$ are distinct. We assume that $\rank(V_1,V_2)=t$ and $\spn\{u_1,\ldots,u_k,v_1,\ldots,v_k,v_{k+1}-u_{k+1}\}\equiv \spn\{w_1,\ldots,w_t\}$ with $w_i\in \mathbb{F}_q^d$ for $1\le i\le t$, then the number of $h$-planes containing both $V_1$ and $V_2$ equals the number of $(h-t)$-tuples of vectors $\{x_1,\ldots,x_{h-t}\}$ in $\mathbb{F}_q^d$ such that $\rank\{w_1,\ldots,w_t,x_1,\ldots,x_{h-t}\}=h$. Thus the number of common neighbors of $V_1$ and $V_2$ is
\[\frac{(q^d-q^t)(q^d-q^{t+1})\ldots (q^d-q^{h-1})}{(q^h-q^t)\ldots (q^h-q^{h-1})}=(1+o(1))q^{(d-h)(h-t)}.\]
Therefore, $NN^T$ can be presented as
\begin{eqnarray}\label{eq11}
NN^T&=&q^{(d-h)(h-2k-1)}J+\left(q^{(d-h)(h-k)}-q^{(d-h)(h-2k-1)}\right)I\\
&+&\sum_{k+1\le t\le 2k}\left(q^{(d-h)(h-t)}-q^{(d-h)(h-2k-1)}\right)E_{t}\nonumber
\end{eqnarray}
where $I$ is the identity matrix and $J$ is the all-one matrix, and for each $t$, $E_t$ are the adjacency matrix of the graphs $G(E_t)$: $V(G(E_t))$ is the set of all $k$-planes, and there is an edge between two $k$-planes $V_1=\spn\{u_1,\ldots,u_k\}+u_{k+1}$ and $V_2=\spn\{v_1,\ldots,v_k\}+v_{k+1}$ if and only if $\rank(V_1,V_2)=t$. We note that these graphs are regular, and we count their degree as follows. For each $k+1\le t\le 2k$, and each vertex $V_1$, we now count the number of $k$-planes $V_2$ such that $\rank(V_1,V_2)=t$. In order to that, we consider two following cases
\begin{enumerate}
\item If $u_{k+1}-v_{k+1}\in \spn\{u_1,\ldots,u_k,v_1,\ldots,v_k\}$, then the number of $V_2$ is 
\[\frac{k!q^t(q^d-q^k)\cdots(q^d-q^{t-1})(q^t-q^{t-k})\cdots(q^t-q^{k-1})}{(t-k)!(2k-t)!q^k(q^k-1)\cdots(q^k-q^{k-1})}=(1+o(1))q^{(t-k)(d-t+k+1)},\]
where $(q^d-q^k)\cdots(q^d-q^{t-1})/(t-k)!$ is the number of $(t-k)$-tuples $\{v_1,\ldots,v_{t-k}\}$ such that $\rank\{u_1,\ldots,u_k,v_1,\ldots,v_{t-k}\}=t$, and $(q^t-q^{t-k})\cdots(q^t-q^{k-1})/(2k-t)!$ is the number of $(2k-t)$-tuples of vectors $\{v_{t-k+1},\ldots,v_k\}$ in $\spn\{u_1,\ldots,u_k,v_1,\ldots,v_{t-k}\}$ such that $\rank\{v_1,\ldots,v_k\}=k$, and $q^t$ is the number of choices of $u_{k-1}-v_{k+1}$ in $\spn\{u_1,\ldots,u_k,v_1,\ldots,v_k\}$, the term $q^k(q^k-1)\cdots(q^k-q^{k-1})/k!$ is the number of different ways to present a $k$-plane. We note that for each choice of $u_{k-1}-v_{k-1}$, then $v_{k+1}$ is determined uniquely. 
\item If $u_{k+1}-v_{k+1}\not\in \spn\{u_1,\ldots,u_k,v_1,\ldots,v_k\}$, then the number of $V_2$ is 
\[\frac{k!(q^d-q^k)\cdots(q^d-q^{t-1})(q^{t-1}-q^{t-k-1})\cdots (q^{t-1}-q^{k-1})}{(t-k)!(2k-t+1)!q^k(q^k-1)\cdots(q^k-q^{k-1})}=(1+o(1))q^{(t-k)(d-t+k+2)-1-k},\]
where $(q^d-q^k)\cdots(q^d-q^{t-1})/(t-k)!$ is the number of $(t-k)$-tuples $\{v_1,\ldots,v_{t-k-1},u_{k+1}-v_{k+1}\}$ such that $\rank\{u_1,\ldots,u_k,v_1,\ldots,v_{t-k-1},u_{k+1}-v_{k+1}\}=t$, and the second term $(q^{t-1}-q^{t-k-1})\cdots(q^{t-1}-q^{k-1})/(2k-t+1)!$ is the number of $(2k-t+1)$-tuples $\{v_{t-k},\ldots,v_k\}$ in $\spn\{u_1,\ldots,u_k,v_1,\ldots,v_{t-k-1}\}$ such that $\rank\{v_1,\ldots,v_k\}=k$.
\end{enumerate}
Therefore, for each $t$, the degree of any vertex in $V(G(E_t))$ is
\[(1+o(1))\left(q^{(t-k)(d-t+k+1)}+q^{(t-k)(d-t+k+2)-1-k}\right)=(1+o(1))q^{(t-k)(d-t+k+1)}.\] 
Let $v_3=(v_1,\ldots,v_{|A|},u_1,\ldots,u_{|B|})$ be the third eigenvector of $M^2$. Lemma \ref{lm1} implies that $(v_1,\ldots,v_{|A|})$ is an eigenvector of $NN^T$ corresponding to the eigenvalue $\lambda_3^2$. It follows from the equation (\ref{eq11}) that
\[\left(\lambda_3^2-(q^{(d-h)(h-k)}-q^{(d-h)(h-2k-1)})\right)v_3=\left(\sum_{k+1\le t\le 2k}\left(q^{(d-h)(h-t)}-q^{(d-h)(h-2k-1)}\right)E_t\right)v_3.\]
Hence, $v_3$ is an eigenvector of \[\sum_{k+1\le t\le 2k}\left(q^{(d-h)(h-t)}-q^{(d-h)(h-2k-1)}\right)E_t.\]Since eigenvalues of sum of matrices are bounded by sum of largest eigenvalues
of the summands, we have 
\[\lambda_3^2\le q^{(d-h)(h-k)}+q^{(d-h)(h-2k-1)}+kq^{(d-h)h+k(2h-d-k+1)}\le (2k+1)q^{(d-h)h+k(2h-d-k+1)},\]
which completes the proof of lemma. \end{proof}
\section{Proofs of Theorems \ref{thm1}, \ref{thm2}, and \ref{thm3}}
\begin{proof}[Proof of Theorem \ref{thm1}]
Since the degree of each $k$-plane is $(1+o(1))q^{(d-h)(h-k)}$, and the number of $h$-planes is $(1+o(1))q^{(d-h)(h+1)}$, we have 
\[\frac{\deg(A)}{|B|}=(1+o(1))q^{-(d-h)(k+1)}.\]
Thus, Theorem \ref{thm1} follows by combining Lemma \ref{expander} and Lemma \ref{eigen}.\end{proof}
\begin{proof}[Proof of Theorems \ref{thm2} and \ref{thm3}]
Combining Theorem \ref{lm3} and Lemma \ref{eigen}, Theorem \ref{thm2} and Theorem \ref{thm3} follow.
\end{proof}

\end{document}